
\documentclass[10pt]{asme2ej}

\usepackage{graphicx}
\usepackage{color}
\usepackage{amsmath} 
\usepackage{amssymb}
\newtheorem{theorem}{Theorem}
\newtheorem{lem}{Lemma}
\newtheorem{prob}{Problem}
\newtheorem{rem}{Remark}
\usepackage{subcaption}
\usepackage{bm}
\usepackage{url}
\newcommand{\figref}[1]{Fig.\,\ref{#1}}
\newcommand{\Figref}[1]{Figure\,\ref{#1}}
\newcommand{\tabref}[1]{Tab.\,\ref{#1}}
\newcommand{\secref}[1]{Sec.\,\ref{#1}}

%

\title{
Structural Analysis and Control of a Model of Two-site Electricity and Heat Supply
}

\author{Hikaru Hoshino\thanks{H. Hoshino is currently with Central Research Institute of Electric Power Industry, 1-6-1, Ohtemachi, Chiyodaku, Tokyo, 100-8126, Japan} 
    \affiliation{
	Department of Electrical Engineering,	Kyoto University\\
      Katsura, Nishikyo, Kyoto 615-8510 Japan
    Email: hoshino@dove.kuee.kyoto-u.ac.jp
    }	
}



\author{Yoshihiko Susuki
    \affiliation{ 
	Department of Electrical and Information Systems Osaka Prefecture University\\
	1-1 Gakuencho, Nakaku, Sakai 599-8531 Japan
	Email: susuki@eis.osakafu-u.ac.jp
    }
}

\author{T. John Koo
    \affiliation{ 
	Hong Kong Applied Science and Technology Research Institute\\
	Photonics Centre, 2 Science Park East Ave., Hong Kong Science Park, Shatin, Hong Kong
	Email: johnkoo@astri.org
    }
}

\author{Takashi Hikihara
    \affiliation{ 
	Department of Electrical Engineering Kyoto University\\
      Katsura, Nishikyo, Kyoto 615-8510 Japan
	Email: hikihara.takashi.2n@kyoto-u.ac.jp
    }
}

\begin{document}

\maketitle    

\begin{abstract}
{\it 
This paper 
introduces a control problem of regulation of energy flows in a two-site electricity and heat supply system, where two Combined Heat and Power (CHP) plants are interconnected via electricity and heat flows. 
The control problem is motivated by recent development of fast operation of CHP plants to provide ancillary services of power system on the order of tens of seconds to minutes. 
Due to the physical constraint that the responses of the heat subsystem are not necessary as fast as those of the electric subsystem, the target controlled state is not represented by any isolated equilibrium point, implying that stability of the system is lost in the long-term sense on the order of hours. 
In this paper, we first prove in the context of nonlinear control theory that the state-space model of the two-site system is non-minimum phase due to nonexistence of isolated equilibrium points of the associated zero dynamics. 
Instead, we locate a one-dimensional invariant manifold that represents the target controlled flows completely.  Then, by utilizing a virtual output under which 
the state-space model becomes minimum phase, 
we synthesize a controller that achieves not only the regulation of energy flows in the short-term regime but also stabilization of an equilibrium point in the long-term regime.  Effectiveness of the synthesized controller is established with numerical simulations with a practical set of model parameters. 
}
\end{abstract}

%

\section{Introduction}

This paper introduces a control problem of electricity and heat supply that is expected to be a typical situation in the next-generation energy systems.  
It is indicated in various studies that design of the next-generation energy systems can not be looked as an isolated issue regarding individual energy infrastructures: see e.g. \cite{geidl07:energyhub,omalley13}. 
For example, the so-called Combined Heat and Power (CHP) technology, which exploits waste heat as a by-product of conversion of fuel into electricity, has established an interaction between electric power systems and District Heating and Cooling (DHC) systems \cite{liu16}. 
The above point of view is described as Energy Systems Integration (ESI) \cite{omalley13} and has recently attracted a lot of interest in engineering and science. 
The ESI aims to manage multiple type of energy such as electricity, heat, and natural gas consistently by utilizing interconnections between various energy systems. 

In ESI, design of energy-management architecture is a central object for satisfying specifications of stability, reliability, and efficiency of energy supply. 
In general energy management systems, a high-level planner calculates optimal values of energy flows, and a low-level controller follows the plan while coping with disturbances. 
For a high-level planner, various systematic methods have been proposed for operational optimization: see e.g. \cite{geidl07:opf,chicco09,mancarella14,liu16}. 
While the above studies successfully show the effectiveness of ESI in a steady state, design of a low-level controller is a challenging problem because of the complex physics of energy transfer and conversion in a wide range of scales in space and time \cite{hara13}.  

In this paper,  we consider a minimal prototype of interconnected energy systems, which is termed as \emph{two-site system}, to address a control problem
motivated by recent developments of fast operation of CHP plants. 
Conventionally, a CHP plant has been operated on a slow time scale of heat supply (order of hours) and has not contributed to the fast control of power systems (order of tens of seconds to minutes). 
Recently, several novel operations of a CHP plant have been proposed to provide ancillary services \cite{rebours07_partI} of power systems. 
For example, it is proposed in \cite{galus11} to provide load frequency control using CHP plants and in \cite{shinji08,mueller14} to compensate variable outputs of renewable energy sources. 
For the purpose of controller synthesis, such an operation of CHP plant is naturally formulated as a stabilization problem of an equilibrium point with respect to the model of  electric subsystem. 
However, due to the fact that the responses of the heat subsystem are not necessary as fast as those of the electric subsystem, the state of the heat subsystem is not stabilized to an equilibrium point in the time scale of interest. 
Thus, the desired state of the entire system is not represented by an equilibrium point, and the stability of the entire system is lost in long-term sense (order of hours). 
Instead, we locate a one-dimensional invariant manifold characterizing the target controlled flows of electricity and heat completely. 

The contribution of this paper is twofold. 
First, we provide a structural analysis of the state-space model of the two-site system.  
The system consists of two CHP plants interconnected via electricity and heat flows and its dynamic characteristics are studied in our previous work \cite{nolta14,cdc15,cndpaper16,scipaper17-en}. 
Here, we analyze the input-output property of the model in the context of nonlinear control theory \cite{isidori95,sastry99} and show that the multiscale property of the controlled system implies that the model is a non-minimum phase system due to nonexistence of isolated equilibrium point of the zero dynamics. 
Instead, a one-dimensional invariant manifold is located to characterize the stability property of the zero dynamics. 
Namely, this paper shows that the dynamical analysis of the heat subsystem is naturally extended to the entire interconnected system in the framework of nonlinear control theory. 

Second, a state-feedback controller is synthesized to regulate both electricity and heat flows in the two-site system.  
While the state-space model is a non-minimum phase system under the original output, as shown in our previous work \cite{scipaper17-en}, the model can be dealt with as a minimum phase system under another output.  
Thus, by utilizing an output redefinition method,  we propose a controller indirectly stabilizes the desired invariant manifold to regulated the original output. 
The above output redefinition is motivated by a recently proposed controller for a non-minimum phase model of hypersonic vehicle in \cite{fiorentini12}. 
The output redefinition method in \cite{fiorentini12} converts the original output into a state trajectory of the new zero dynamics and effectively works in the application in this paper. 
Due to the minimum phase property under the redefined output, stabilization of an equilibrium point is also achieved by the common control structure.  
The stabilization of the equilibrium point is necessary when the state of the heat subsystem deviates from an acceptable range due to the long-term dynamics along the invariant manifold.   
Effectiveness of the controller is established by numerical simulations with a practical setting of model parameters.

With the general purpose of designing the next-generation energy systems, various studies have formulated and solved control problems. 
In the context of smart grids, for example, an enhanced automatic generation control is proposed in \cite{ilic12} to address sensing, communication, and control architecture to manage temporal and spacial characteristics of power systems. 
In \cite{dorfler16}, an averaging-based distributed control method is explored to offer the best combination of flexibility and performance without hierarchical decision making and time-scale separations.
In \cite{stegink17}, a problem of maximizing the social welfare is studied while stabilizing both the physical power network as well as the market dynamics.  
As for heat supply systems, an output regulation problem is addressed in \cite{persis14} for large-scale hydraulic networks found in  DHC systems. 
These studies mainly focus on synthesis of distributed controllers, and the control problems addressed in these studies are naturally formulated as stabilization problems of equilibrium points. 
This paper is complementary to these and considers stabilization of an invariant manifold, thereby enabling direct regulation of energy flows in the interconnected energy system exhibiting multiscale dynamics.

The rest of this paper is organized as follows:
In \secref{sec:modeling}, we derive a state-space model of the two-site system and formulate the control problem to regulate electricity and heat flows. 
\secref{sec:analysis} conducts a structural analysis of the derived model. 
In \secref{sec:control}, we present the controller synthesis based on redefinition of the output. 
In \secref{sec:simulation}, the effectiveness of the controller is established with numerical simulation.  
Conclusions of this paper are provided in \secref{sec:conclusion} with future work.

\section{Modeling and problem formulation} \label{sec:modeling}

\begin{figure}[t!]
\centering
\includegraphics[width=0.6\hsize]{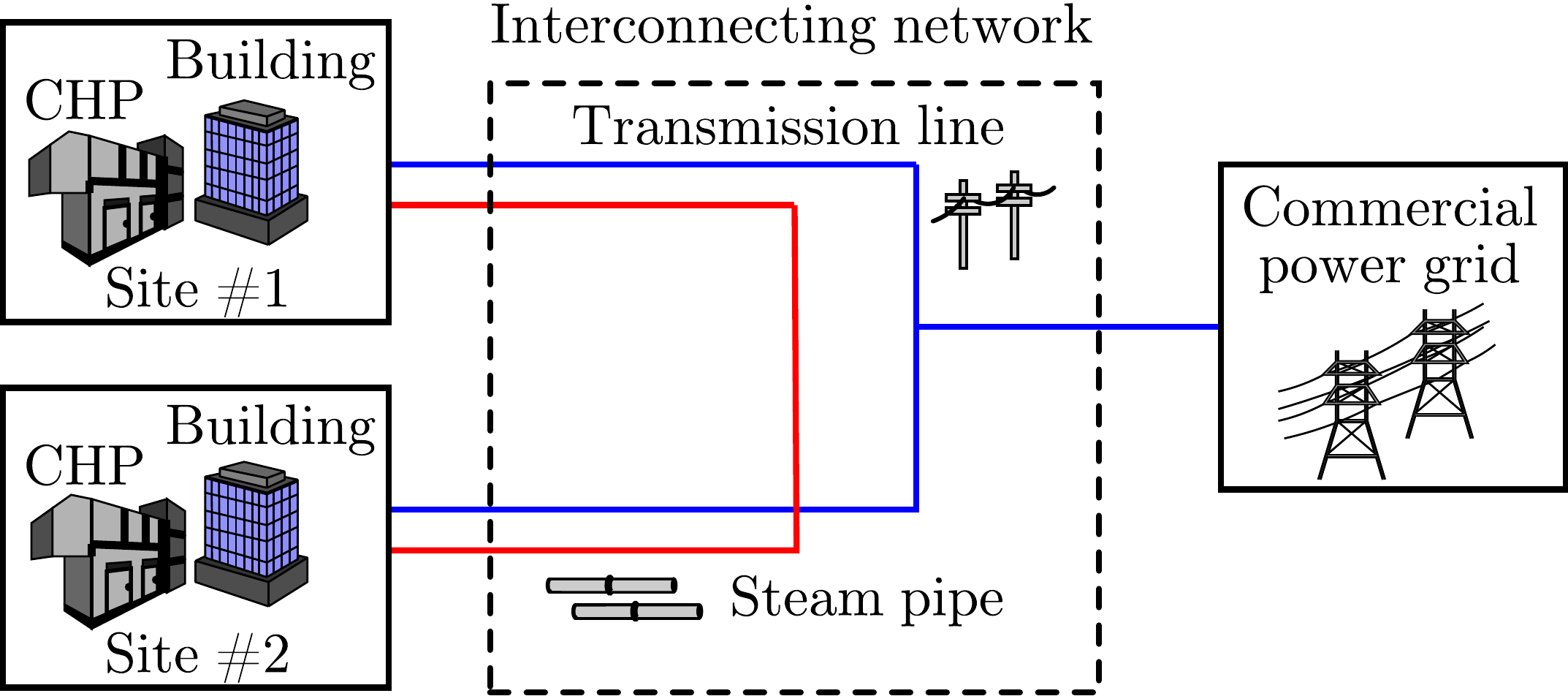}
\caption{Schematic diagram of two-site system for electricity and heat supply} \label{fig:twosite_system}
\end{figure}

This section introduces a two-site system for electricity and heat supply and formulates the output regulation problem studied in this paper. 
%
%
%
%
Figure~\ref{fig:twosite_system} shows the schematic diagram of the two-site system. 
The concept of \emph{site} stands for a unit of energy system that consists of CHP plant, electric load, and heat load \cite{nolta14}. 
The two sites are connected to a commercial power grid through a transmission (distribution) line and it is possible to sell excess electricity to the grid. 
The two sites are also interconnected by a steam pipe.
The practical example of site is a commercial, civil, or large residential building with own CHP plant, and the entire system can be regarded as a minimal prototype of practical DHC systems with CHP plants as reported in e.g. \cite{buoro14,liu16,nedo_kobe18}. 
In the context of electricity supply, the system can be regarded as an extension of the so-called three-node network, for which static and dynamic characteristics have been studied in e.g.  \cite{araposthatis81,ueda92}.
In terms of the heat supply, the system is minimal for considering the heat transfer between different sites in urban areas \cite{nedo_kobe18}.
In this paper, we introduce two subsystems based on their physical characteristics: \emph{electric subsystem} and \emph{heat subsystem}.  
\Figref{fig:energy_flow} shows the diagram of energy flows in the two-site system. 
The CHP plant at each site comprises gas turbine, synchronous generator, and heat recovery boiler. 
At each site $\#i$ for $i=1, 2$, the fuel flow $P_{{\rm g}i}$ to the gas turbine can be controlled via a fuel valve. 
As a result of fuel combustion, the mechanical power $P_{{\rm m}i}$ is produced and transmitted to the generator in each CHP plant.
The generated power $P_{{\rm e}i}$ is then supplied to the electric loads, as well as the commercial power grid. 
The grid is modeled by an infinite bus \cite{kundur94}, which is a voltage source with constant amplitude, frequency, and phase. 
The heat flow $Q'_{{\rm a}i}$ is absorbed by the heat recovery boiler, and $Q'_{{\rm b}i}$ is supplied to the heat loads.
Here, the \emph{electric subsystem} comprises the generators, electric loads, transmission line, and infinite bus in \figref{fig:energy_flow}. 
The \emph{heat subsystem} comprises the heat recovery boilers, heat loads, and pipeline. 
The two subsystems are interconnected via the gas turbines in the CHP plants.

\begin{figure}[!t]
 \centering
 \includegraphics[width=0.6\hsize]{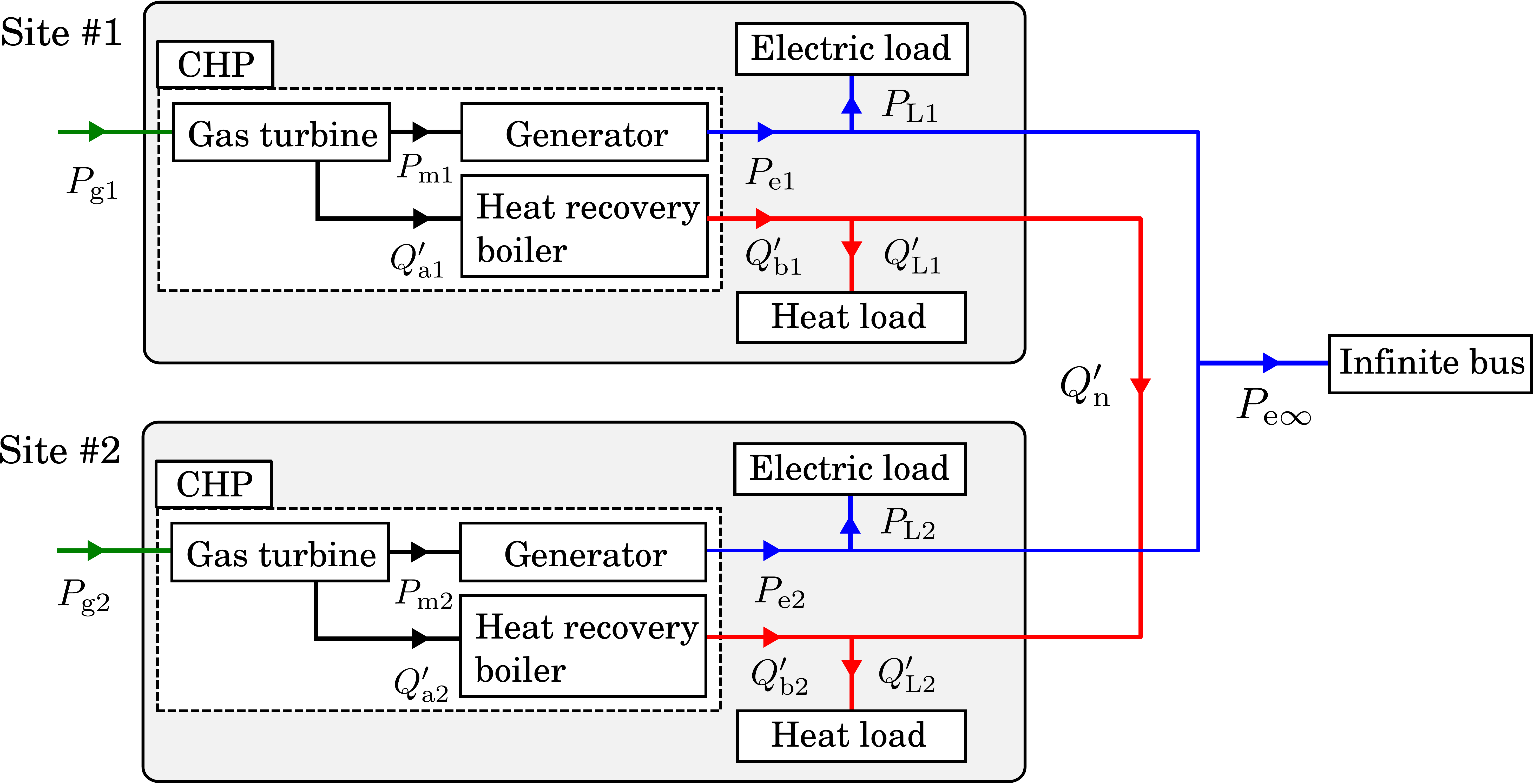}
 \caption{%
 Energy flow diagram of the two-site system. 
 The arrows show the positive directions of the energy flows.}
 \label{fig:energy_flow}
\end{figure}


%
The state variable of the state-space model is given by  $x:=(x_{\rm g}^\top,\,x_{\rm e}^\top,\,x_{\rm h}^\top)^\top \in X \subset \mathbb{R}^{13}$ with the variable $x_{\rm g}$ of the gas turbines, $x_{\rm e}$ of the electric subsystem, and $x_{\rm h}$ of the heat subsystem as follows:
\begin{subequations}
\begin{align}
 & x_{\rm g} = [x_{\rm g1},\, \dots x_{\rm g6}]^\top \in X_{\rm g} \subset \mathbb{R}^{6}, \\
 & x_{\rm e} = [x_{\rm e1},\, \dots x_{\rm e4}]^\top \in X_{\rm e} \subset \mathbb{R}^{4}, \\
 & x_{\rm h} = [x_{\rm h1},\, \dots x_{\rm h3}]^\top \in X_{\rm h} \subset \mathbb{R}^{3}. 
\end{align}
\end{subequations}
The physical meanings of the variables and their domains $X$, $X_{\rm g}$,  $X_{\rm e}$, and  $X_{\rm h}$ are described in Appendix~\ref{sec:model_equations}. 
Then, the state-space model studied in this paper is given as follows:  
\begin{subequations} \label{eq:system_equation}
\begin{align}
 \underbrace{
 \begin{bmatrix} \dot{x}_{\rm g} \\ \dot{x}_{\rm e} \\ \dot{x}_{\rm h} \end{bmatrix}
 }_{\dot{x}}
 & =
 \underbrace{
    \begin{bmatrix} 
       f_{\rm g}(x_{\rm g}) \\ f_{\rm e}(x_{\rm e},\,x_{\rm g}) \\ f_{\rm h}(x_{\rm h},\,x_{\rm g}) 
    \end{bmatrix}
}_{f(x)}
+  
\sum_{i=1}^2
\underbrace{
   \begin{bmatrix} g_{{\rm g}i}(x_{\rm g}) \\ 0  \\ 0  \end{bmatrix} 
}_{g_i(x)}
  u_i, 
 \\
 y & =\begin{bmatrix} y_1& y_2\end{bmatrix}^\top
	 = 
\underbrace{
   \begin{bmatrix} h_{\rm e}(x) & h_{\rm h}(x) \end{bmatrix}^\top
}_{h^\top(x)}
\end{align}
\end{subequations}
where $\dot{x}$ stands for the time derivative of $x$, and $\top$ for the transpose of a vector or matrix. 
The functions  $f_{\rm g}$, $f_{\rm e}$, $f_{\rm h}$, $g_{\rm g1}$, and $g_{\rm g2}$ represent dynamic characteristics of the model and given in in Appendix~\ref{sec:model_equations}. 
The functions $h_{\rm e}$ and $h_{\rm h}$ are the outputs of the electric and heat subsystems, respectively. 


%
The control objective of this paper is to regulate the electric power $P_{\rm e\infty}$ to the infinite bus and the
heat flow rate $Q'_{12}$ between the two sites to given reference values.    
In terms of the electricity supply, $P_{\rm e\infty}$ is an important quantity for providing ancillary services \cite{galus11}.   
The heat flow rate $Q'_{12}$ is involved in the load sharing in the multi-boiler systems \cite{bujak09}
and discussed in \cite{chinese08,buoro14}. 
Thus, the output regulation problem is formulated with respect to the following two outputs $y_1$ and $y_2$:  
\begin{subequations}
\label{eq:output}
\begin{align}
 y_1&= h_{\rm e}(x) := P_{\rm e\infty}(x_{\rm e1}, x_{\rm e3}), \\
 y_2&= h_{\rm h}(x) := Q'_{12}(x_{\rm h2}). 
\end{align}
\end{subequations}

\begin{prob} \label{prob:tracking}
Consider the model \eqref{eq:system_equation} with the outputs in \eqref{eq:output}. 
For given references $Y_1^{\rm ref}$ and $Y_2^{\rm ref}$, find a state-feedback control law such that 
\begin{align}
 \lim_{t\to\infty} (y_1(t),\, y_2(t)) = (Y_1^{\rm ref},\, Y_2^{\rm ref}). 
\end{align}
\end{prob}

Here we briefly reviews a preliminary result on physical and system theoretic properties of the controlled system. 
In particular, for the dynamics of the heat subsystem, the following lemma holds: 
\begin{lem}
\label{thm:NHIM}
 Consider the heat subsystem with constant inputs $u_1^\ast$ and $u_2^\ast$: 
\begin{align}
 \dot{x}_{\rm h} = f_{\rm h}(x_{\rm h}, \, x_{\rm g}^\ast) \label{eq:heat_subsystem_constraints}
\end{align}
where $x_{\rm g}^\ast$ stands for the steady state of the gas turbines under the inputs  $u_1^\ast$ and $u_2^\ast$ satisfying $f_{\rm g}(x_{\rm g}^\ast) + g_{\rm g1}(x_{\rm g}^\ast)u_1^\ast + g_{\rm g2}(x_{\rm g}^\ast)u_2^\ast=0$.
The set(manifold) given by 
\begin{align} \label{eq:invariant_manifold_heat}
 \mathcal{I}_{\rm h} = \left\{ x_{\rm h} \in X_{\rm h} ~\big|~ f_{\rm h1}(x_{\rm h}, x_{\rm g}^\ast) = f_{\rm h2}(x_{\rm h},\,x_{\rm g}^\ast) =0 ~  \right\} 
\end{align}
is invariant under the flow described by the system \eqref{eq:heat_subsystem_constraints}. 
\end{lem}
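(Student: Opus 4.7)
The plan is to verify forward-invariance of $\mathcal{I}_{\rm h}$ directly, by checking that the vector field $f_{\rm h}(\cdot,x_{\rm g}^\ast)$ is tangent to $\mathcal{I}_{\rm h}$ at every point where the two defining constraints hold. Introducing the constraint functions $\phi_k(x_{\rm h}):=f_{{\rm h}k}(x_{\rm h},x_{\rm g}^\ast)$ for $k=1,2$, invariance of their common zero set is equivalent to the Lie-derivative conditions $L_{f_{\rm h}}\phi_k\big|_{\mathcal{I}_{\rm h}}=0$ for $k=1,2$. Expanding,
\begin{equation*}
L_{f_{\rm h}}\phi_k \;=\; \sum_{j=1}^{3}\frac{\partial f_{{\rm h}k}}{\partial x_{{\rm h}j}}\,f_{{\rm h}j}(x_{\rm h},x_{\rm g}^\ast),
\end{equation*}
and on $\mathcal{I}_{\rm h}$ the $j=1,2$ terms vanish by definition, so invariance reduces to showing $(\partial f_{{\rm h}k}/\partial x_{{\rm h}3})\,f_{{\rm h}3}=0$ on $\mathcal{I}_{\rm h}$ for $k=1,2$.

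To close this step I would inspect the explicit expressions of $f_{\rm h1}$ and $f_{\rm h2}$ given in Appendix~\ref{sec:model_equations} and verify the structural property $\partial f_{{\rm h}k}/\partial x_{{\rm h}3}\equiv 0$ for $k=1,2$. This is natural on physical grounds: the two components $f_{\rm h1}$ and $f_{\rm h2}$ should encode the heat balances at the two boiler/site nodes (driven by the gas-turbine heat injection $x_{\rm g}^\ast$ and the local load/transfer terms parametrized by $x_{{\rm h}1},x_{{\rm h}2}$), whereas the remaining coordinate $x_{{\rm h}3}$ plays the role of an integrator-like storage variable that drifts along the manifold but does not feed back into the balances. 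Once this independence is read off the model equations, both Lie-derivative conditions are satisfied trivially.

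Invariance then follows from the standard existence–uniqueness argument applied to the resulting reduced system: for any initial state $x_{\rm h}(0)\in\mathcal{I}_{\rm h}$, the subsystem for $(x_{{\rm h}1},x_{{\rm h}2})$ becomes autonomous (no $x_{{\rm h}3}$-dependence) and has the constant map $(x_{{\rm h}1}(0),x_{{\rm h}2}(0))$ as a solution, so the defining constraints persist for all $t$, while $x_{{\rm h}3}$ evolves according to $\dot x_{{\rm h}3}=f_{{\rm h}3}(x_{\rm h}(t),x_{\rm g}^\ast)$, tracing out the one-dimensional flow along $\mathcal{I}_{\rm h}$ consistent with the abstract's remark on a one-dimensional invariant manifold. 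The main obstacle I anticipate is precisely verifying the structural independence $\partial f_{{\rm h}k}/\partial x_{{\rm h}3}\equiv 0$ from the model data; should that independence fail in some coordinate, one would instead need an algebraic identity relating $\partial f_{{\rm h}k}/\partial x_{{\rm h}3}$ and $f_{{\rm h}3}$ on $\mathcal{I}_{\rm h}$ (for instance, $f_{{\rm h}3}$ vanishing whenever $\partial f_{{\rm h}k}/\partial x_{{\rm h}3}$ does not), which is a less clean but still feasible route that reduces to a direct computation with the equations of the heat subsystem.
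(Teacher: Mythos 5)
Your proposal is correct and takes essentially the same route as the paper: the paper omits the detailed proof (deferring to its cited prior work) but states that the lemma rests precisely on the fact you identify, namely that $f_{\rm h1}$ and $f_{\rm h2}$ are independent of the averaged pressure level $x_{\rm h3}$, which is indeed immediate from the explicit expression of $f_{\rm h}$ in Appendix~\ref{sec:model_equations}. Your closing existence--uniqueness argument for the autonomous $(x_{\rm h1},x_{\rm h2})$ subsystem correctly fills in the details the paper leaves out, and the fallback contingency you mention is not needed.
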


\begin{proof}
The proof of this lemma is straightforward from \cite{cdc15,cndpaper16} where modeling and analysis of heat supply are performed for the general $n$-site system. Thus it is omitted. \qed
\end{proof}

The lemma follows from the fact that the dynamics of the heat subsystem are independent of the variable $x_{\rm h3}$ standing for the (weighted) averaged pressure level of the two boilers as can be observed in Appendix~\ref{sec:model_equations}. 
As will be shown in  \secref{sec:analysis}, the above result is extended to the entire interconnected energy system, and a similar invariant manifold is identified in the zero dynamics of the model \eqref{eq:system_equation}. 
It will be shown that 
the regulation of the energy flows to arbitrary chosen $Y_1^{\rm ref}$ and $Y_2^{\rm ref}$ is not necessary achievable at a steady state,  
and the desired energy flows are not represented by any equilibrium point of the model \eqref{eq:system_equation}. 


\section{Structural Analysis} \label{sec:analysis}

This section performs a structural analysis of the state-space model \eqref{eq:system_equation}  in the context of nonlinear control theory. 
We prove that the model is a non-minimum phase system \cite{isidori95,sastry99} under the output given in \eqref{eq:output}, and we locate a low-dimensional invariant manifold in the associated zero dynamics to exploit inherent structure of the controlled system.

\subsection{Analysis of non-minimum phase property} \label{sec:analysis_non_minimum_phase}

Here, we analyze zero dynamics of the model \eqref{eq:system_equation} and investigate its non-minimum phase property.
In general, zero dynamics are derived by considering the internal dynamics with the outputs kept to zero for all time. 
This is achieved by applying input-output linearization \cite{isidori95,sastry99}, and the result is summarized in the following lemma:
\begin{lem} \label{thm:normal_form}
 Consider the state-space model \eqref{eq:system_equation} with the outputs in \eqref{eq:output}. 
 Then, there exists an open set $D$ of the state space $X$ such that the model has vector relative degree $\{5,4\}$ at $x \in D$, and the coordinate transformation given by 
\begin{align} \label{eq:coordinate_transformation}
 \Phi :  \mathbb{R}^{13} \ni x \mapsto (\xi_{\rm e},\,\xi_{\rm h},\,\eta) \in  \mathbb{R}^{5} \times  \mathbb{R}^{4} \times  \mathbb{R}^{4} 
\end{align}
with $\xi_{\rm e} = [\xi_{\rm e1},\,\dots,\,\xi_{\rm e5}]^\top$, $\xi_{\rm h} = [\xi_{\rm h1},\,\dots,\,\xi_{\rm h4}]^\top$, and $\eta  =[\eta_{\rm 1},\,\dots,\,\eta_{\rm 4}]^\top$ given by
\begin{subequations}
\begin{align} 
 \xi_{\rm e} & := [ h_{\rm e}(x), \, L_f h_{\rm e}(x) ,\, \dots, \, L_f^4h_{\rm e}(x) ]^\top,  \label{eq:xi_e} \\
 \xi_{\rm h} & := [h_{\rm h}(x), \, L_f h_{\rm h}(x), \dots \,,  L_f^3h_{\rm h}(x) ]^\top,  \\
\eta &  := \left[ x_{\rm g3}, \, x_{\rm e1}, \, x_{\rm e2}, \,  x_{\rm h3} \right]^\top, 
\end{align} 
\end{subequations}
is a diffeomorphism on $D$, where $L_f {h}$ stands for the Lie derivative of $h$ along $f$. 
\end{lem}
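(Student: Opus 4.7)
\emph{Proof plan.} The strategy is the standard input-output linearization procedure: compute the iterated Lie derivatives $L_f^k h_{\rm e}$ and $L_f^k h_{\rm h}$, identify the first order at which each control vector field $g_i$ appears, and then complete $(\xi_{\rm e},\xi_{\rm h})$ to a local chart with an appropriate choice of internal coordinates $\eta$. The cascade structure $u \to x_{\rm g} \to (x_{\rm e},x_{\rm h}) \to y$ of the model~\eqref{eq:system_equation} guides the counting: since each $g_i$ has support only in the $x_{\rm g}$ block, the relative degree of each output equals the number of integrators between $u_i$ and that output along the model equations in Appendix~\ref{sec:model_equations}.

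First, for $y_1 = P_{\rm e\infty}(x_{\rm e1},x_{\rm e3})$, the swing equations $\dot{\delta}=\omega$, $M\dot{\omega}=P_{\rm m}-P_{\rm e}$ imply that $\dot{y}_1$ is a combination of the frequencies, while $\ddot{y}_1$ first involves the mechanical powers $P_{{\rm m}i}$, which lie in the $x_{\rm g}$ block; each subsequent Lie derivative then advances one step along the gas-turbine first-order-lag chain, yielding $L_{g_i}L_f^k h_{\rm e}\equiv 0$ for $k\le 3$ and $L_{g_i}L_f^4 h_{\rm e}$ generically nonzero. For $y_2 = Q'_{12}(x_{\rm h2})$, a single differentiation suffices to reach the $x_{\rm g}$ block since $\dot{x}_{\rm h2}$ depends on $x_{\rm g}$ directly through the boiler absorption term, so three further differentiations through the gas-turbine chain give relative degree $4$. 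I would then verify that the $2\times 2$ decoupling matrix whose rows are $[L_{g_1}L_f^4 h_{\rm e},\,L_{g_2}L_f^4 h_{\rm e}]$ and $[L_{g_1}L_f^3 h_{\rm h},\,L_{g_2}L_f^3 h_{\rm h}]$ is nonsingular on an open set $D\subset X$; because input $u_i$ drives only plant $\#i$, the two rows arise from distinct physical paths and are generically independent.

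With vector relative degree $\{5,4\}$ summing to $9$, the remaining dimension is $13-9=4$, matching the proposed $\eta=(x_{\rm g3},x_{\rm e1},x_{\rm e2},x_{\rm h3})$. To show that $\Phi$ is a diffeomorphism on $D$, I would check that the Jacobian of $\Phi$ has full rank at each $x\in D$. The four $\eta$-components are chosen to fill the directions that the $\xi$-chain does not pin down: $x_{\rm e1}$ together with $\xi_{\rm e1}=P_{\rm e\infty}(x_{\rm e1},x_{\rm e3})$ recovers $x_{\rm e3}$; $x_{\rm e2}$ together with $\xi_{\rm e2}$ recovers the second frequency; $x_{\rm g3}$ is the gas-turbine component not determined by the chain of higher Lie derivatives of $h_{\rm e}$; and $x_{\rm h3}$ is precisely the averaged pressure identified in Lemma~\ref{thm:NHIM} as absent from the heat-subsystem dynamics, hence absent from $h_{\rm h}$ and its derivatives as well. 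The resulting block structure of the Jacobian then reduces invertibility to the nonvanishing of the decoupling-matrix determinant already established.

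The main obstacle will be the explicit verification that the decoupling matrix $A(x)$ and the $\xi$--$\eta$ Jacobian are full rank, and the precise characterization of the set $D$. These are bookkeeping tasks given the closed-form expressions in Appendix~\ref{sec:model_equations}, but they require symbolic tracking through several layers of the cascade; the set $D$ will emerge as the locus where no factor in the relevant determinants vanishes, which should correspond to physically reasonable operating points (rotor angles bounded away from the static instability limit, positive fuel flows, and admissible boiler pressures).
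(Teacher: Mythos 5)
Your overall route is the same as the paper's: direct input--output linearization, with the relative degrees read off the cascade $u \to x_{\rm g} \to (x_{\rm e},x_{\rm h}) \to y$, followed by nonsingularity of the decoupling matrix and of ${\rm D}\Phi$ on an open set $D$ (the paper carries out the Lie-derivative bookkeeping symbolically and finds both determinants depend only on $(x_{\rm e1},x_{\rm e3})$). However, two of your intermediate claims are wrong as stated. First, the heat output does \emph{not} reach the gas-turbine block after one differentiation: $\dot{x}_{\rm h2}$ is the momentum equation in \eqref{eq:heat_model}, $\dot{w}=\bigl((p_1-p_2)/\rho_{\rm s}-(\lambda L/2d)\,w|w|\bigr)/T_{\rm h3}$, which contains no $x_{\rm g}$ dependence; the absorption terms $Q'_{{\rm a}i}(w_{{\rm f}i})$ enter through $\dot{x}_{\rm h1}$ (the pressure-difference equation). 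So $x_{\rm g}$ first appears in $L_f^2 h_{\rm h}$, and the entry point is $w_{\rm f}$, which is two integrators from $u$, not three. Your count $1+3=4$ has two compensating errors; the correct count is $2+2=4$. The chain you describe (one differentiation reaching $x_{\rm g}$ through the boiler absorption, then the full turbine lag chain) is actually the chain for the redefined output $\hat{y}_2=x_{\rm h3}$ of Lemma~\ref{lem:redefined_normal_form}, which has relative degree $3$, so following your reasoning literally would spoil the $\{5,4\}$ claim.

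Second, your assertion that invertibility of ${\rm D}\Phi$ ``reduces to the nonvanishing of the decoupling-matrix determinant already established'' is not correct, and the paper's computation shows why: ${\rm det}\,{\sf A}(x)$ in \eqref{eq:decoupling} is one linear combination $-A_1\,{\rm d}P_{\infty1}/{\rm d}x_{\rm e1}-A_2\,{\rm d}P_{2\infty}/{\rm d}x_{\rm e3}$, whereas ${\rm det}\,{\rm D}\Phi(x)$ in \eqref{eq:transformation} is a product of $({\rm d}P_{\infty2}/{\rm d}x_{\rm e3})^3$ with two \emph{different} combinations $F_1\,{\rm d}P_{\infty1}/{\rm d}x_{\rm e1}+F_2\,{\rm d}P_{\infty2}/{\rm d}x_{\rm e3}$ and $F_3\,{\rm d}P_{\infty1}/{\rm d}x_{\rm e1}+F_4\,{\rm d}P_{\infty2}/{\rm d}x_{\rm e3}$. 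Neither nonvanishing condition implies the other; $D$ must be taken as the intersection $D_1\cap D_2$ of the two loci, which is what the paper does. Your closing remark that $D$ is where ``no factor in the relevant determinants vanishes'' effectively repairs this, but the intermediate reduction should be dropped and ${\rm det}\,{\rm D}\Phi$ checked as a separate condition.
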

\begin{proof}
 See Appendix~\ref{sec:proof_normal_form}. \qed
\end{proof}

The zero dynamics of the model \eqref{eq:system_equation} are described by the variable $\eta$.  
While the zero dynamics are derived by considering the internal dynamics with the outputs kept to zero, for the current control objective, it is technically important to keep the outputs to nonzero values $Y_1^{\rm ref}$ and $Y_2^{\rm ref}$. 
Strictly speaking, this corresponds to modifying the outputs to the set-point error given by
\begin{align}
\label{eq:set_point_error}
  e_{\rm s} :=  \begin{bmatrix} y_1- Y_1^{\rm ref},~ y_2- Y_2^{\rm ref} \end{bmatrix}^\top.    
\end{align}
With respect to the set-point error \eqref{eq:set_point_error}, the zero dynamics are described as follows: 
\begin{align} \label{eq:zero_dynamics}
 \dot{\eta} = \left. q \left( \xi_{\rm e},\, \xi_{\rm h},\,\eta \right) \right|_{\xi_{\rm e}=[Y_1^{\rm ref},\,0,\,\dots,\,0]^\top,\, \xi_{\rm h}=[Y_2^{\rm ref},\,0,\,0,\,0]^\top}
\end{align}
where the vector-valued function $q$ 
is given by 
\begin{align}
 q(\xi_{\rm e},\, \xi_{\rm h},\, \eta) := (r \circ \Phi^{-1})(\xi_{\rm e},\, \xi_{\rm h},\, \eta) \label{eq:internal_dynamics}
\end{align}
with 
\begin{align} \label{eq:function_r}
 r(\cdot) :=\begin{bmatrix}
 f_{\rm g3}(\cdot) & f_{\rm e1}(\cdot) & f_{\rm e2}(\cdot) & f_{\rm h3}(\cdot)  \end{bmatrix}^\top. 
\end{align}
To investigate the non-minimum phase property, it is crucial that the variable $\eta$ contains the averaged pressure level $\eta_4 := x_{\rm h3}$. 
The following lemma, which states that the variable $\eta_4$ does not affect the internal dynamics, will be instrumental in identifying an invariant manifold in the zero dynamics: 

\begin{lem} \label{lem:independence_eta4}
 The function $q(\xi_{\rm e},\,\xi_{\rm h},\,\eta)$ is independent of the variable $\eta_4$. 
\end{lem}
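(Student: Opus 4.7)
The plan is to exploit the cascaded structure of the vector field in \eqref{eq:system_equation} together with Lemma~\ref{thm:NHIM}. Since $q = r \circ \Phi^{-1}$ with $r(x) = [f_{\mathrm{g}3}(x),\, f_{\mathrm{e}1}(x),\, f_{\mathrm{e}2}(x),\, f_{\mathrm{h}3}(x)]^\top$, I would reduce the claim to two intermediate facts: (i) $r$ has no $x_{\mathrm{h}3}$-dependence; and (ii) under $\Phi^{-1}$, the components $(x_{\mathrm{g}}, x_{\mathrm{e}}, x_{\mathrm{h}1}, x_{\mathrm{h}2})$ are recovered from $(\xi_{\mathrm{e}}, \xi_{\mathrm{h}}, \eta_1, \eta_2, \eta_3)$ alone, while $\eta_4 = x_{\mathrm{h}3}$ decouples. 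Combining the two, $q(\xi_{\mathrm{e}}, \xi_{\mathrm{h}}, \eta) = r(\Phi^{-1}(\xi_{\mathrm{e}}, \xi_{\mathrm{h}}, \eta))$ cannot involve $\eta_4$.

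Step (i) is where Lemma~\ref{thm:NHIM} directly enters. The block structure of \eqref{eq:system_equation} already forces $f_{\mathrm{g}3}=f_{\mathrm{g}3}(x_{\mathrm{g}})$ and $f_{\mathrm{e}1}, f_{\mathrm{e}2}$ to depend only on $(x_{\mathrm{g}}, x_{\mathrm{e}})$, so they are automatically free of $x_{\mathrm{h}3}$. For $f_{\mathrm{h}3}$, Lemma~\ref{thm:NHIM} asserts that the entire heat subsystem is independent of $x_{\mathrm{h}3}$, so every component of $f_{\mathrm{h}}$---in particular $f_{\mathrm{h}3}$---inherits this property.

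Step (ii) amounts to showing that $x_{\mathrm{h}3}$ enters the diffeomorphism $\Phi$ of Lemma~\ref{thm:normal_form} only through the slot $\eta_4 = x_{\mathrm{h}3}$. The outputs $h_{\mathrm{e}}(x) = P_{\mathrm{e}\infty}(x_{\mathrm{e}1}, x_{\mathrm{e}3})$ and $h_{\mathrm{h}}(x) = Q'_{12}(x_{\mathrm{h}2})$ involve no $x_{\mathrm{h}3}$; I would then prove by induction on $k$ that every iterated Lie derivative $L_f^k h_{\mathrm{e}}$ and $L_f^k h_{\mathrm{h}}$ is likewise independent of $x_{\mathrm{h}3}$, since the update rule $L_f \phi = \sum_i (\partial \phi / \partial x_i)\, f_i$ can introduce $x_{\mathrm{h}3}$-dependence only via an $f_i$ that itself depends on $x_{\mathrm{h}3}$, and no component of $f_{\mathrm{g}}$, $f_{\mathrm{e}}$, or (by Lemma~\ref{thm:NHIM}) $f_{\mathrm{h}}$ does. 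Combined with the trivial choices $\eta_1 = x_{\mathrm{g}3}$, $\eta_2 = x_{\mathrm{e}1}$, $\eta_3 = x_{\mathrm{e}2}$, this leaves the single nonzero entry $\partial \eta_4 / \partial x_{\mathrm{h}3} = 1$ in the column $\partial \Phi / \partial x_{\mathrm{h}3}$. Inverting the resulting block-structured Jacobian yields $x_{\mathrm{h}3} = \eta_4$ while $(x_{\mathrm{g}}, x_{\mathrm{e}}, x_{\mathrm{h}1}, x_{\mathrm{h}2})$ depend only on $(\xi_{\mathrm{e}}, \xi_{\mathrm{h}}, \eta_1, \eta_2, \eta_3)$.

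The main obstacle I anticipate is the bookkeeping inside the induction in step (ii): each inductive step is straightforward, but one must carefully track the expanding set of state variables that appear in the successive Lie derivatives and verify that $x_{\mathrm{h}3}$ never sneaks back in. The cascaded triangular form of $f$---$f_{\mathrm{g}}$ sees only $x_{\mathrm{g}}$, $f_{\mathrm{e}}$ sees $(x_{\mathrm{g}}, x_{\mathrm{e}})$, and $f_{\mathrm{h}}$ sees $(x_{\mathrm{g}}, x_{\mathrm{h}1}, x_{\mathrm{h}2})$---makes this mechanical rather than conceptual.
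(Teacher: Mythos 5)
Your proposal is correct and takes essentially the same route as the paper: the paper likewise observes that, since no component of $f$ depends on $x_{\rm h3}$, the maps $\xi_{\rm e}$ and $\xi_{\rm h}$ are independent of $x_{\rm h3}$, so the reduced twelve-dimensional transformation $\Psi : (x_{\rm g},\,x_{\rm e},\,x_{\rm h1},\,x_{\rm h2}) \mapsto (\xi_{\rm e},\,\xi_{\rm h},\,[\eta_1,\,\eta_2,\,\eta_3]^\top)$ is well defined and invertible (inherited from ${\rm det}\,{\rm D}\Phi \neq 0$ on $D$), whence $(x_{\rm g},\,x_{\rm e},\,x_{\rm h1},\,x_{\rm h2})$ are recovered without $\eta_4$ and $q = r\circ\Phi^{-1}$ cannot involve it. The only cosmetic difference is one of attribution and detail: the $x_{\rm h3}$-independence of the heat dynamics is read directly off the model equations (it is the reason Lemma~\ref{thm:NHIM} holds rather than its statement), and the paper asserts the $x_{\rm h3}$-independence of the Lie derivatives without the explicit induction you spell out.
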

\begin{proof}
 See Appendix~\ref{sec:independence_eta4}. 
\qed
\end{proof}

Based on the above lemma, the following theorem identifies an invariant manifold characterizing the stability property of the zero dynamics:  

\begin{theorem}
  Consider the zero dynamics of the model \eqref{eq:system_equation} with respect to the set-point error \eqref{eq:set_point_error}.  
  Suppose that the following set $\mathcal{I}$ given by
\begin{align} \label{eq5:invariant_manifold}
 \mathcal{I}=&  \left\{ \eta ~\big|~ F_1(\eta)=F_2(\eta)=F_3(\eta)=0 \right\} 
\end{align}
forms an embedded submanifold of the state space of the zero dynamics, where the functions $F_1$ to $F_3$ are the first three elements of the following vector-valued function:   
\begin{align}
 F (\eta) := q ([Y_1^{\rm ref},\,0,\,\dots,\,0]^\top,\, [Y_2^{\rm ref},\,0,\,0,\,0]^\top,\,\eta ).
\end{align}
Then, the manifold $\mathcal{I}$ is invariant with respect to the flow described by \eqref{eq:zero_dynamics}--\eqref{eq:function_r}. 
 \end{theorem}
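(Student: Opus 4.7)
The key ingredient is Lemma~\ref{lem:independence_eta4}, which asserts that $q$, and hence the zero-dynamics vector field $F$, is independent of $\eta_4$. Consequently each component $F_i$ may be viewed as a function of $(\eta_1,\eta_2,\eta_3)$ alone, and the first three equations $\dot{\eta}_i = F_i(\eta_1,\eta_2,\eta_3)$ for $i=1,2,3$ form a closed subsystem in the reduced state $(\eta_1,\eta_2,\eta_3)$, fully decoupled from the slow variable $\eta_4$.

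The plan is to verify invariance via a direct uniqueness argument applied to this reduced subsystem. Pick any initial condition $\eta(0) \in \mathcal{I}$, so that $F_1(\eta(0)) = F_2(\eta(0)) = F_3(\eta(0)) = 0$. The constant curve $(\eta_1(t),\eta_2(t),\eta_3(t)) \equiv (\eta_1(0),\eta_2(0),\eta_3(0))$ solves the reduced subsystem trivially, since its right-hand side vanishes at the initial point. Assuming the smoothness of $F$ needed for local Lipschitz continuity (inherited from the model data in the appendix), the standard ODE uniqueness theorem implies this is the only solution, so $(\eta_1(t),\eta_2(t),\eta_3(t))$ remains constant, $F_i(\eta(t))=0$ for $i=1,2,3$ along the trajectory, and therefore $\eta(t) \in \mathcal{I}$. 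Moreover, along the trajectory $\eta_4$ drifts linearly in $t$ with speed $F_4(\eta_1(0),\eta_2(0),\eta_3(0))$, which is what ultimately produces the long-term loss of stability referred to in the introduction.

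Equivalently, one may verify the infinitesimal tangency criterion $L_F F_i \equiv 0$ on $\mathcal{I}$ for $i=1,2,3$. Expanding $L_F F_i = \sum_{j=1}^{4}(\partial F_i/\partial \eta_j)\,F_j$, the $j=1,2,3$ terms vanish because $F_1=F_2=F_3=0$ on $\mathcal{I}$, while the $j=4$ term vanishes because $\partial F_i/\partial \eta_4 = 0$ by Lemma~\ref{lem:independence_eta4}. This is the classical Nagumo-type invariance condition for a submanifold defined as a common zero set, and the embedded-submanifold hypothesis in the statement is precisely what licenses reading off invariance from this tangency. Since the essential structural content has already been absorbed into Lemma~\ref{lem:independence_eta4}, the theorem is in effect a short corollary; the only mild care required is the regularity needed for ODE uniqueness (or, in the Nagumo formulation, a clean handling of the tangency computation), and no genuine obstacle remains.
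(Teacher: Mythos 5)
Your proposal is correct, and your second (tangency) argument is essentially the paper's proof: the paper shows ${\rm T}_p\mathcal{I}$ is the kernel of the Jacobian of $(F_1,F_2,F_3)$, whose fourth column vanishes by Lemma~\ref{lem:independence_eta4}, and that on $\mathcal{I}$ the vector field reduces to $[0~0~0~F_4(p)]^\top$, which is exactly your computation $L_F F_i=\sum_j(\partial F_i/\partial\eta_j)F_j=0$ on $\mathcal{I}$. Your first argument, however, is a genuinely different and somewhat stronger route: by Lemma~\ref{lem:independence_eta4} the $(\eta_1,\eta_2,\eta_3)$ equations form a closed subsystem, every point of $\mathcal{I}$ is by definition an equilibrium of that subsystem, and ODE uniqueness (under local Lipschitz regularity, available here since the only nonsmooth term $x_{\rm h2}|x_{\rm h2}|$ is handled by the standing assumption $w>0$) forces $(\eta_1,\eta_2,\eta_3)$ to stay frozen while $\eta_4$ drifts linearly at speed $F_4$. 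This version does not need the embedded-submanifold hypothesis at all and yields more than bare invariance—it describes the flow on $\mathcal{I}$ explicitly, which is consistent with the slow $\eta_4$ drift observed numerically in \secref{sec:simulation_zero_dynamics}—whereas the paper's kernel characterization of ${\rm T}_p\mathcal{I}$ implicitly presumes the defining functions have full rank on $\mathcal{I}$, a point your uniqueness argument sidesteps entirely.
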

\begin{proof}
 The invariance of $\mathcal{I}$ under the zero dynamics is equivalent to $F(p) \in {\rm T}_p \mathcal{I}$ for all $p \in \mathcal{I}$, where ${\rm T}_p \mathcal{I}$ stands for the tangent space of $\mathcal{I}$ at $p \in \mathcal{I}$.  
Here, from Lemma~\ref{lem:independence_eta4}, the tangent space ${\rm T}_p \mathcal{I}$ is given as follows: 
\begin{align}
 T_p \mathcal{I} = {\rm ker} 
 \begin{bmatrix}
 \dfrac{\partial F_1}{\partial \eta_1}(p) & \dfrac{\partial F_1}{\partial \eta_2}(p) & \dfrac{\partial F_1}{\partial \eta_3}(p)  & 0 \\[3mm]
 \dfrac{\partial F_2}{\partial \eta_1}(p) & \dfrac{\partial F_2}{\partial \eta_2}(p) & \dfrac{\partial F_2}{\partial \eta_3}(p)  & 0 \\[3mm]
 \dfrac{\partial F_3}{\partial \eta_1}(p) & \dfrac{\partial F_3}{\partial \eta_2}(p) & \dfrac{\partial F_3}{\partial \eta_3}(p)  & 0 \\ 
 \end{bmatrix} 
\end{align}
where ${\rm ker} M$ stands for the kernel space of a linear mapping described by a finite matrix $M$. 
Furthermore, from the assumption, the vectorfield at $p \in \ \mathcal{I}$ is written as $[0~0~0~F_4(p)]^\top$.  
Thus, it can be verified that $F(p) \in {\rm T}_p \mathcal{I}$.
\qed
\end{proof} 

While the above theorem locates an invariant manifold in the zero dynamics, this implies that the desired state to satisfy Problem~1 can be represented by the following invariant manifold: 
\begin{align}
 \mathcal{I}_{\rm s} = \bigl\{ (\xi_{\rm e},\,\xi_{\rm h},\,\eta) ~\big|~ & \xi_{\rm e}=[Y_1^{\rm ref},\,0,\,\dots,\,0]^\top,\,  \notag \\ &
 \xi_{\rm h}=[Y_2^{\rm ref},\,0,\,0,\,0]^\top, \notag \\ & 
 q_1(\eta)=q_2(\eta)=q_3(\eta)=0 \bigr\}.
\end{align}
In \secref{sec:control} we will synthesize a controller that indirectly stabilizes the above invariant manifold through a suitable output redefinition. 

\begin{rem}
This theorem implies that the model \eqref{eq:system_equation} is a non-minimum phase system because of no isolated asymptotically stable 
equilibrium point in the zero dynamics. 
This holds even if there exists an equilibrium point satisfying $e_s=0$.  
This is because any equilibrium point of the zero dynamics is not asymptotically stable as shown below. 
If there exists an equilibrium point $\eta^\ast$ of the zero dynamics, from Lemma~\ref{lem:independence_eta4}, any point in the following set is also an equilibrium point: 
\begin{align} \label{eq:equilibrium_set}
 \mathcal{I}^\ast=&  \left\{ ( \eta_1^\ast ,\,\eta_2^\ast ,\,\eta_3^\ast, \,  \eta_4^\ast +a )^\top ~\big|~ a \in \mathbb{R} \right\}. 
\end{align}
Thus, no equilibrium point of the zero dynamics is isolated. 
This implies that no equilibrium point is asymptotically stable, and hence the model is a non-minimum phase system. 
\end{rem}

\begin{figure}[t!]
\begin{minipage}{\hsize}
\centering
 \includegraphics[width=0.6\hsize]{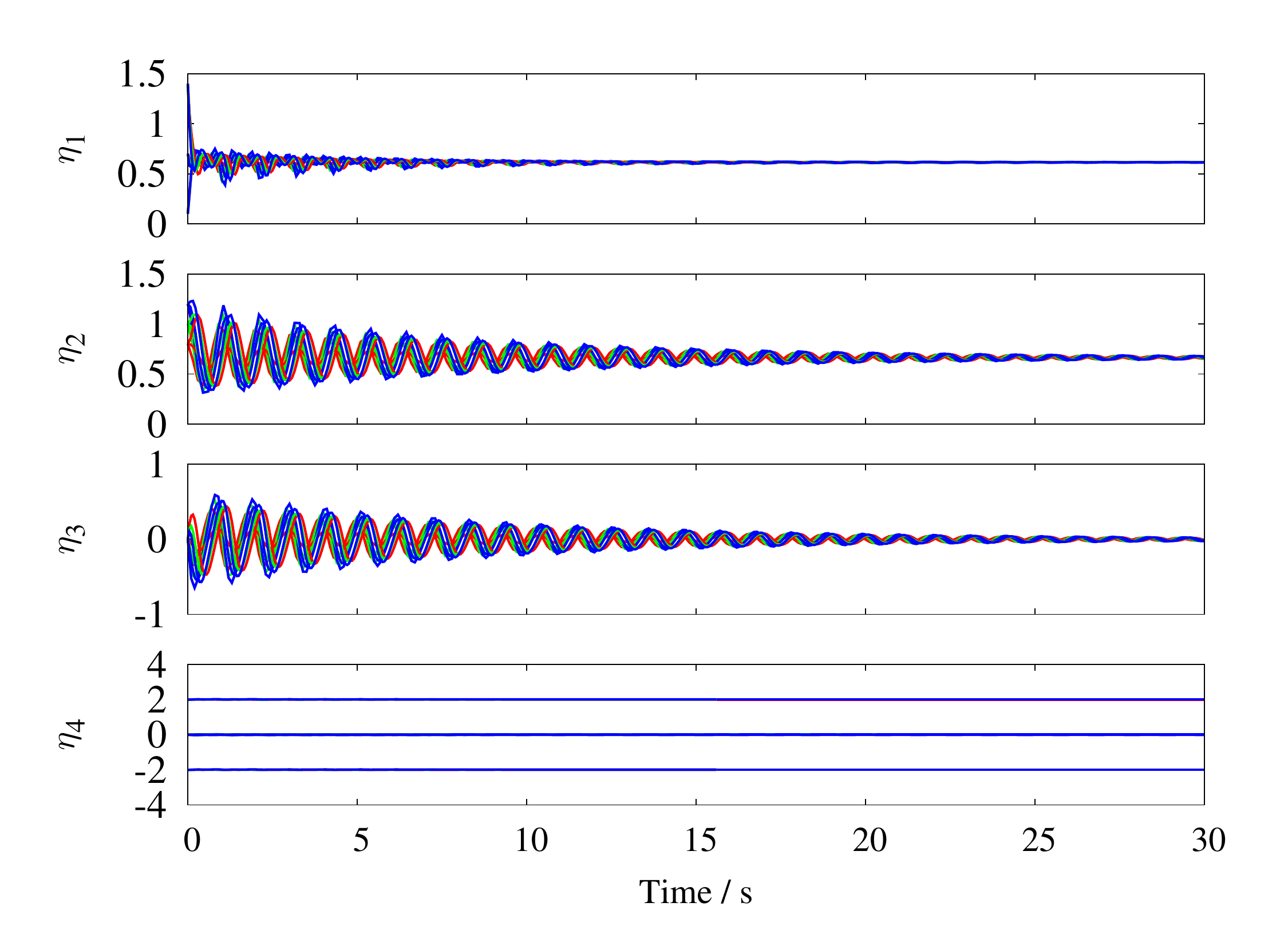}
\subcaption{Responses of the variables $\eta_1,\,\eta_2,\,\eta_3,\,\eta_4$} \label{fig5:zero_dynamics_ep_response}
\vspace{-10mm} 
\hspace{-10mm}
\includegraphics[width=0.6\hsize]{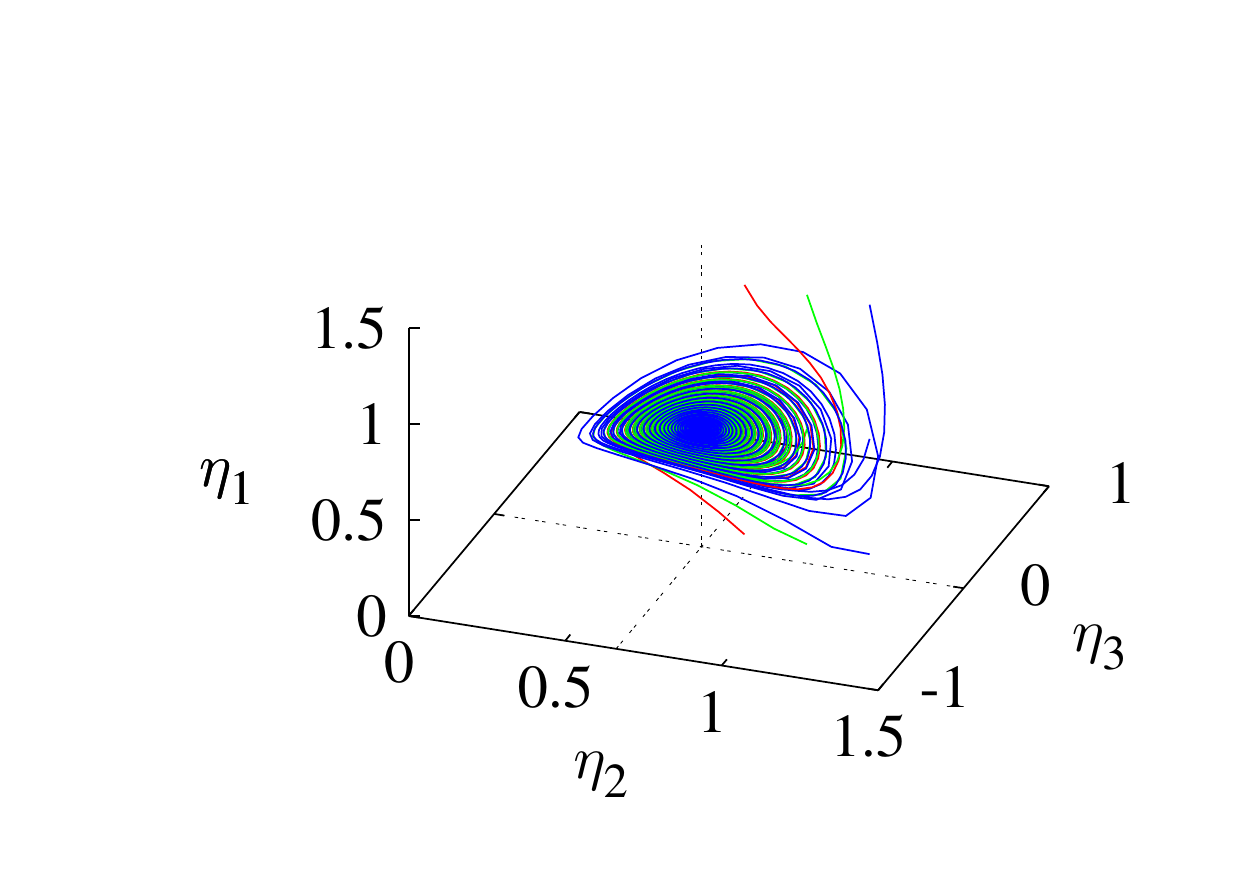}
\subcaption{Projection to $(\eta_1,\,\eta_2,\,\eta_3)$ space} \label{fig5:zero_dynamics_ep_eta123}
\vspace{-10mm} 
\hspace{-5mm}
\includegraphics[width=0.6\hsize]{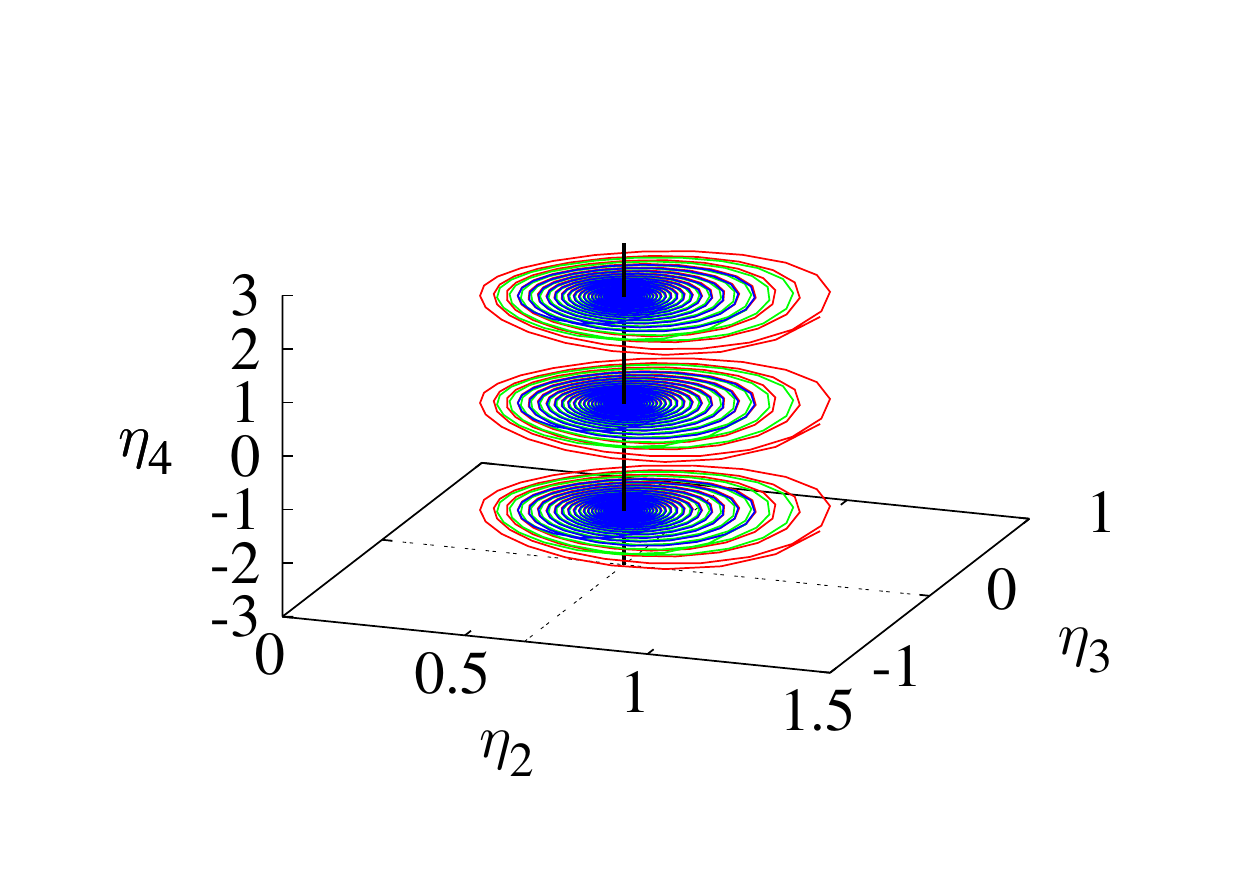}\
\subcaption{Projection to $(\eta_2,\,\eta_3,\,\eta_4)$ space} \label{fig5:zero_dynamics_ep_eta234}
\end{minipage}
\caption{Trajectories of the zero dynamics described by \eqref{eq:zero_dynamics} with a setting of $Y_1^{\rm ref}$ and $Y_2^{\rm ref}$ such that there exists a set of equilibrium points: $Y_1^{\rm ref}=1.0$ and $Y_2^{\rm ref} =1.69$.} \label{fig5:zero_dynamics_ep}
\end{figure}

\begin{figure}[t!]
\begin{minipage}{\hsize}
\centering
 \includegraphics[width=0.6\hsize]{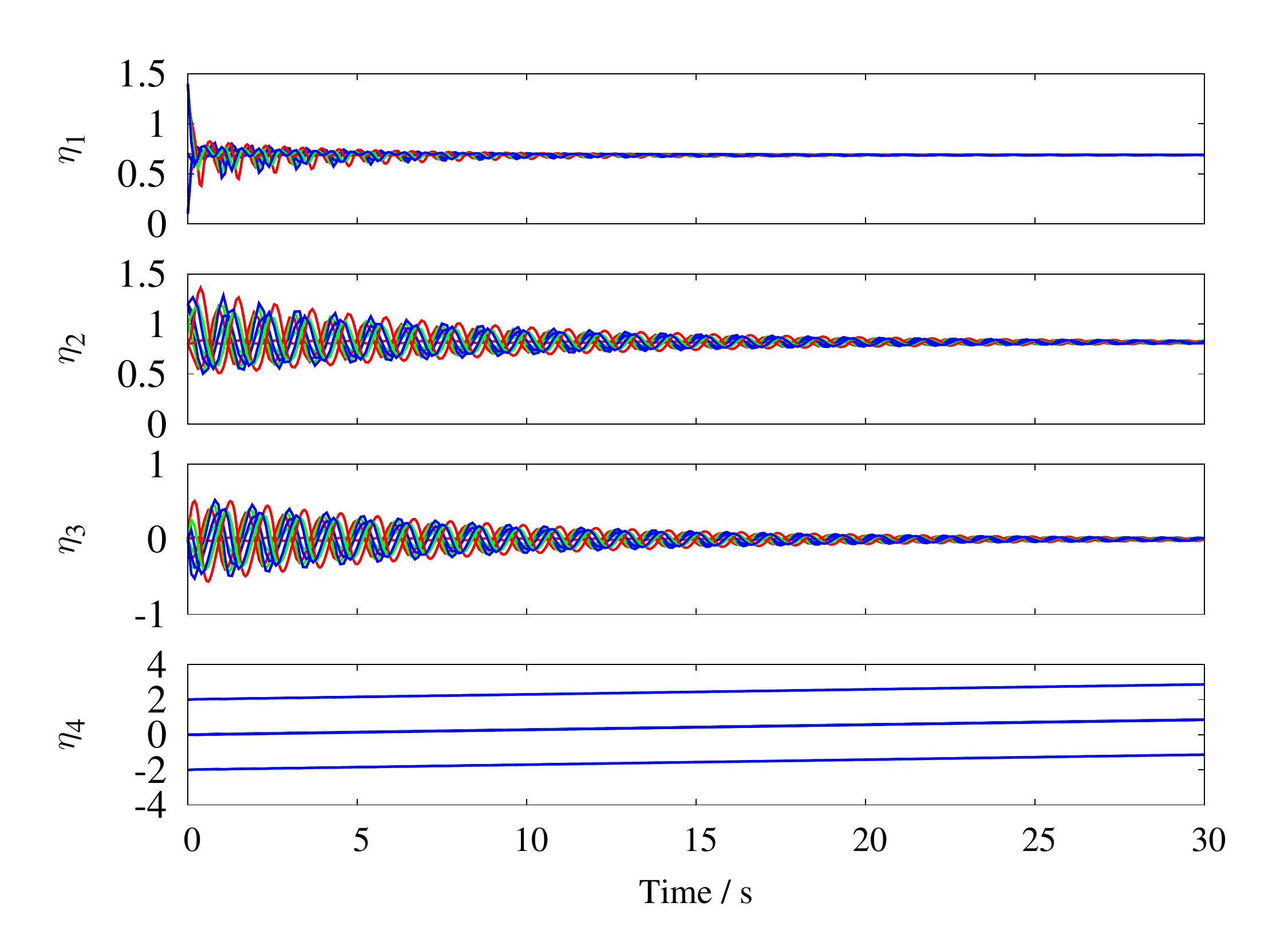}
\subcaption{Responses of the variables $\eta_1,\,\eta_2,\,\eta_3,\,\eta_4$} 
\vspace{-10mm} 
\hspace{-10mm}
\includegraphics[width=0.6\hsize]{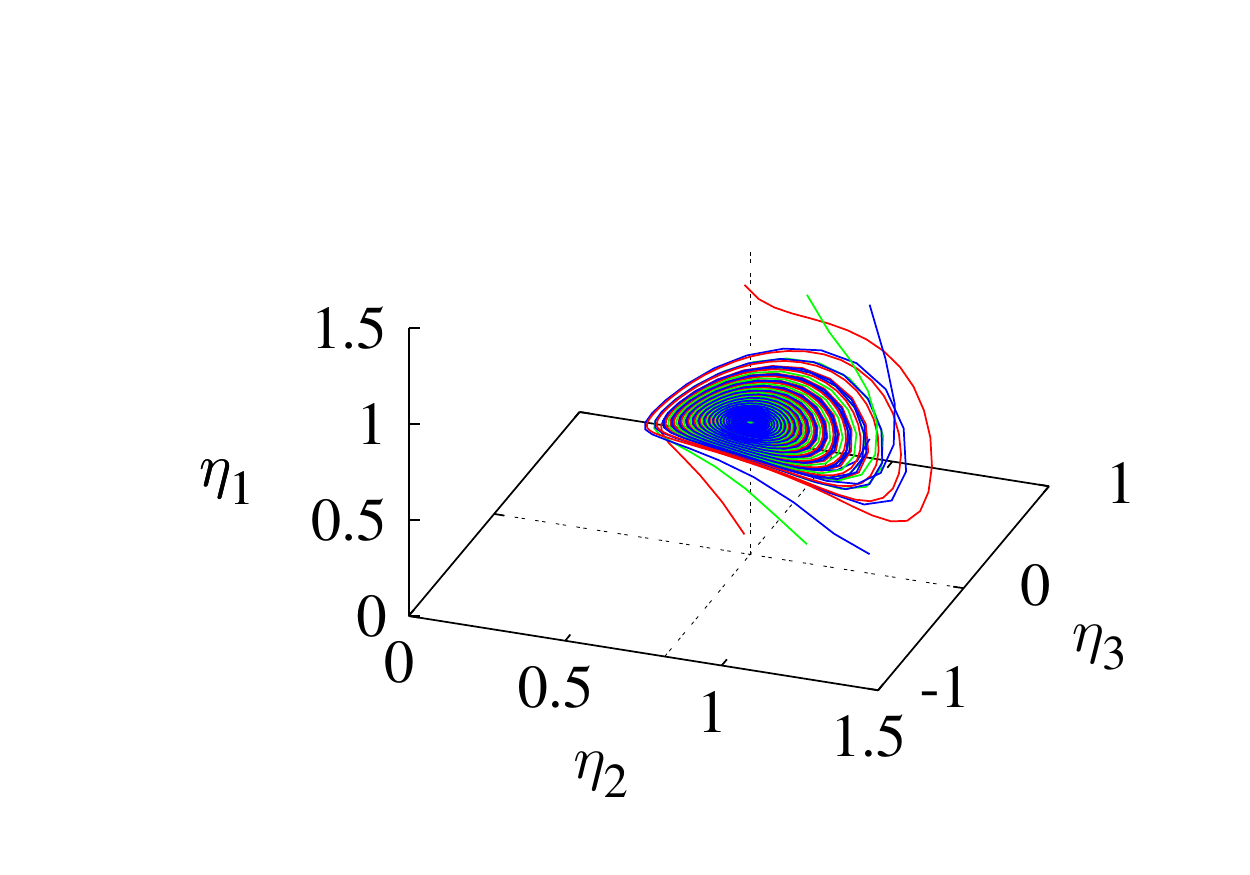}
\subcaption{Projection to $(\eta_1,\,\eta_2,\,\eta_3)$ space}
\vspace{-10mm} 
\hspace{-5mm}
\includegraphics[width=0.6\hsize]{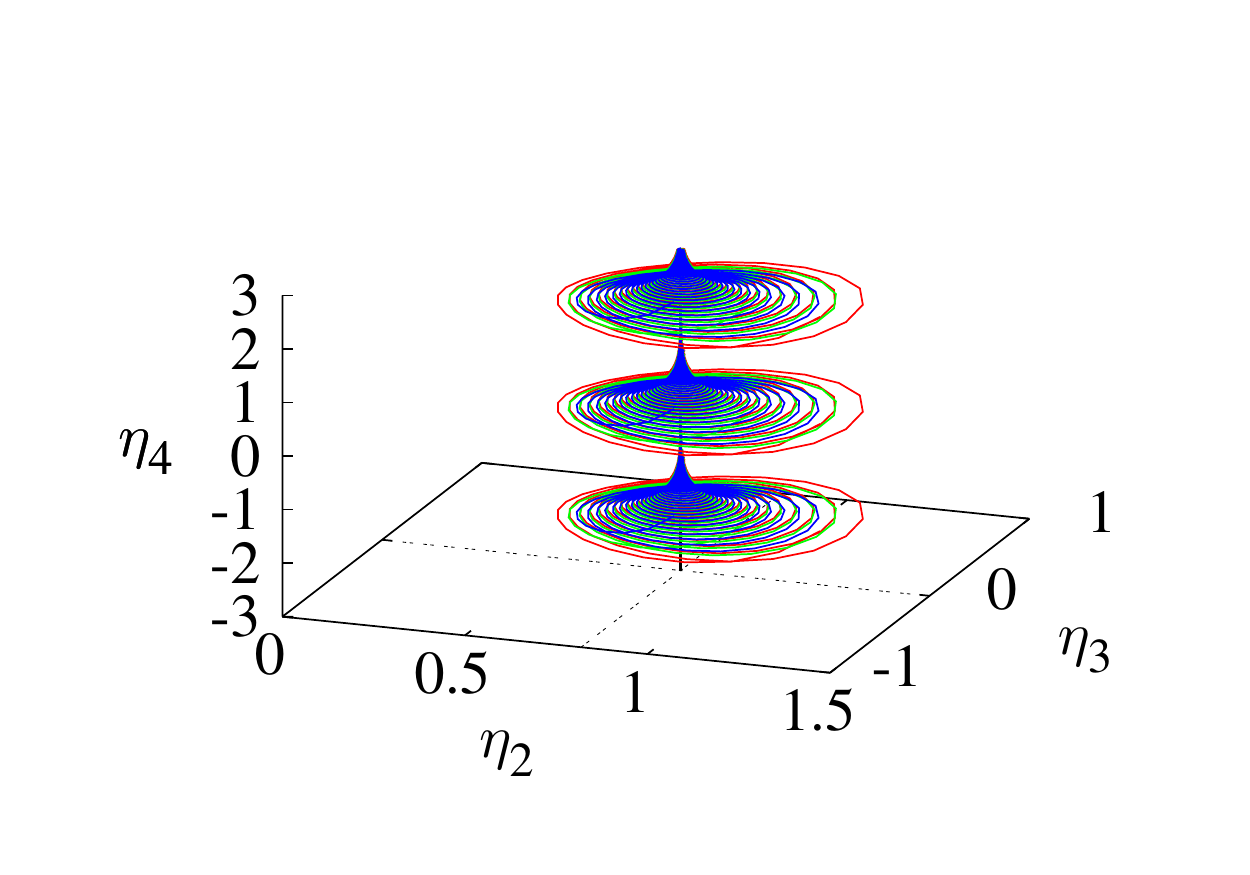}
\subcaption{Projection to $(\eta_1,\,\eta_2,\,\eta_3)$ space}
\end{minipage}
\caption{%
Trajectories of the zero dynamics described by \eqref{eq:zero_dynamics} with a setting of $Y_1^{\rm ref}$ and $Y_2^{\rm ref}$ such that no equilibrium point exists: $Y_1^{\rm ref}=1.2$ and $Y_2^{\rm ref} =1.69$. %
} \label{fig5:zero_dynamics_no_ep}
\end{figure}

\subsection{Numerical simulation of zero dynamics} \label{sec:simulation_zero_dynamics}

This subsection provides numerical simulations of the zero dynamics to illustrate the existence of the invariant manifold.  
The setting of parameters is shown in \tabref{tab:parameters}. 
First, we provide a numerical simulation of the zero dynamics under a case where the references $Y_1^{\rm ref}$ and $Y_2^{\rm ref}$ are chosen so that there exists a set of equilibrium points of the zero dynamics. 
Figure~\ref{fig5:zero_dynamics_ep} shows several trajectories with the following values of $Y_1^{\rm ref}$ and $Y_2^{\rm ref}$: 
\begin{align}
 Y_1^{\rm ref}=1.0, \quad Y_2^{\rm ref} =1.69. 
\end{align}
Figure~\ref{fig5:zero_dynamics_ep_response} shows the responses of the $i$-th element $\eta_i$ under trajectories from fifteen different initial conditions. 
In the figure, the \emph{red} lines show the trajectories from the initial conditions with $\eta_2=0.8$, the \emph{green} lines with $\eta_2=1.0$, and the \emph{blue} lines with $\eta_2=1.2$. 
To illustrate the existence of the invariant manifold mentioned above, Figs.~\ref{fig5:zero_dynamics_ep_eta123} and \ref{fig5:zero_dynamics_ep_eta234} provide projections of the trajectories in the full four-dimensional phase space to lower-dimensional sub-spaces. 
In \figref{fig5:zero_dynamics_ep_eta123}, several trajectories are projected to the $(\eta_1,\,\eta_2,\,\eta_3)$ space, and it is observed that %
these variables converge to constants (see also the first to third rows of \figref{fig5:zero_dynamics_ep_response}).
In \figref{fig5:zero_dynamics_ep_eta234}, where trajectories are projected to the $(\eta_2,\,\eta_3,\,\eta_4)$ space, the black line shows the set of equilibrium points that form the invariant manifold given by \eqref{eq5:invariant_manifold}. 
It is confirmed that all the trajectories converge to the invariant manifold.

Furthermore, a similar invariant manifold can be identified when no equilibrium point exists. 
The invariant manifold is parallel to that in \figref{fig5:zero_dynamics_ep} consisting of equilibrium points.  
Figure~\ref{fig5:zero_dynamics_no_ep} shows the trajectories of the zero dynamics with the following values of $Y_1^{\rm ref}$ and $Y_2^{\rm ref}$: 
\begin{align}
 Y_1^{\rm ref}=1.2, \quad Y_2^{\rm ref} =1.69. 
\end{align} 
As in \figref{fig5:zero_dynamics_ep}, all the trajectories converge to an invariant manifold in the phase space of the zero dynamics. 
However, in the current case, the values of $\eta_4$ slowly increase after the convergence to the invariant manifold, implying that the stability of the system is lost in long-terms sense. 
The variable $\eta_4$ parametrizes the points on the invariant manifold and represents the slow dynamics along the manifold. 

\section{Controller synthesis} \label{sec:control}

This section studies a controller synthesis for Problem~\ref{prob:tracking} through a suitable redefinition of the outputs. 
We propose a common control scheme that achieves not only the regulation of energy flows in the short-term regime but also stabilization of an equilibrium point in the long-term regime. 

\subsection{Main idea}

\begin{figure*}[!t]
\centering 
\includegraphics[width=0.9\hsize]{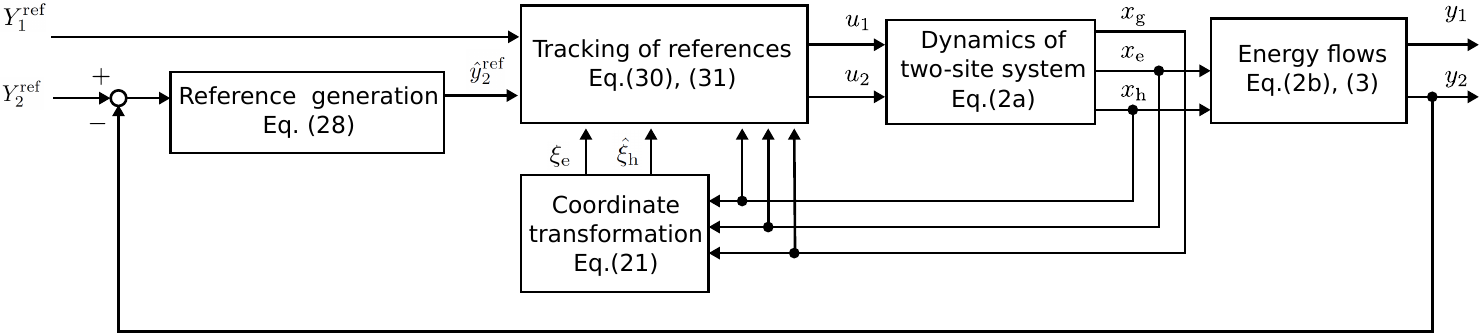}
\caption{Block diagram of the proposed controller for the two-site electricity and heat supply}
\label{fig:control_scheme}
\end{figure*}

The controller synthesized in this paper indirectly stabilizes the desired invariant manifold while dealing with the model as a minimum phase system. 
Due to this, stabilization of an equilibrium point is also possible by the same control scheme when the averaged pressure level $\eta_4$ deviates from an acceptable range due to the dynamics along the invariant manifold.  
The controller is synthesized by a suitable redefinition of the output, which is motivated by the recently proposed output redefinition method for a non-minimum phase model of hypersonic vehicle in \cite{fiorentini12}. 
The method converts the original output into a state trajectory of the new zero dynamics. 
As will be reviewed in \secref{sec:normal_form_redefined}, it is shown in \cite{scipaper17-en} that the model \eqref{eq:system_equation} can be treated as a minimum phase system by considering the following outputs: 
\begin{subequations} \label{eq:redefined_output}
\begin{align}
 &y_1=h_{\rm e}(x) = P_{\rm e\infty}(x_{\rm e1},\,x_{\rm e3}),  \\
 &\hat{y}_2 = \hat{h}_{\rm h}(x):= \eta_4 = x_{\rm h3}. 
\end{align}
\end{subequations}
In this paper, we show that the regulation of energy flows can be achieved with the outputs in \eqref{eq:redefined_output} by considering an augmented system with the integral error. 
Figure~\ref{fig:control_scheme} shows the block diagram of the proposed controller. 
For given references of the outputs $y_1$ and $y_2$, the part of reference generation calculates the reference $\hat{y}_2^{\rm ref}(t)$ for the redefined output $\hat{y}_2$.

\subsection{Normal form under redefined output} \label{sec:normal_form_redefined}

Here, we derive the normal form of the model \eqref{eq:system_equation} with respect to the outputs in \eqref{eq:redefined_output}. 
Towards the controller synthesis in the next subsection, it will be proven that the new internal dynamics are independent of the redefined output $\hat{y}_2$. 
Note that this corresponds to Lemma~\ref{lem:independence_eta4} since the output $\hat{y}_2$ is equivalent to $\eta_4$.  
First, the following lemma summarizes the result of input-output linearization: 
\begin{lem}[\cite{scipaper17-en}] \label{lem:redefined_normal_form}
 Consider the model \eqref{eq:system_equation} with the outputs in \eqref{eq:redefined_output}. 
 Then, there exists an open subset $\hat{D}$ of the state space $X$ such that the model has vector relative degree $\{5, 3\}$ at $x\in \hat{D}$, and the coordinate transformation given by 
\begin{align} \label{eq:coordinate_transformation_redefined}
 \hat{\Phi}(x): \mathbb{R}^{13} \ni x \mapsto (\xi_{\rm e},\, \hat{\xi}_{\rm h},\, \hat{\eta}) \in \mathbb{R}^5 \times \mathbb{R}^3  \times \mathbb{R}^5 
\end{align}
is a diffeomorphism on $\hat{D}$, where $\xi_{\rm e}$ is given by \eqref{eq:xi_e}, and $\hat{\xi}_{\rm h}=[\hat{\xi}_{\rm h1}, \, \hat{\xi}_{\rm h2} \, \hat{\xi}_{\rm h3}]^\top$ and $\hat{\eta} = [\hat{\eta}_1 ,\, \dots, \, \hat{\eta}_5]^\top$ by
\begin{subequations}
\begin{align}
 \hat{\xi}_{\rm h} &  
:= \begin{bmatrix} \hat{h}_{\rm h}(x) & L_f \hat{h}_{\rm h}(x)  & L_f^2\hat{h}_{\rm h}(x)  \end{bmatrix}^\top, \\
 \hat{\eta} & 
:=  \begin{bmatrix}x_{\rm g3},\, x_{\rm e1},\, x_{\rm e2},\, x_{\rm h1}, \, x_{\rm h2} \end{bmatrix}^\top. \label{eq:eta_bar}
\end{align} 
\end{subequations}
\end{lem}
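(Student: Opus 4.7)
The plan is to establish the two claims — vector relative degree $\{5,3\}$ and the diffeomorphism property of $\hat{\Phi}$ — by direct Lie-derivative computations, reusing the machinery already built for Lemma~\ref{thm:normal_form} wherever possible.

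First, since the electric output $h_{\rm e}$ and the input vectorfields $g_1, g_2$ are unchanged, the identities $L_{g_i} L_f^k h_{\rm e}\equiv 0$ for $k=0,\dots,3$ and $L_{g_i} L_f^4 h_{\rm e}\not\equiv 0$ carry over verbatim from Lemma~\ref{thm:normal_form}, so $h_{\rm e}$ again has relative degree $5$ and furnishes the five coordinates $\xi_{\rm e}$. For the new output $\hat{h}_{\rm h}(x)=x_{\rm h3}$, I would compute Lie derivatives along $f$ iteratively. Because $g_i(x)$ has nonzero entries only in the gas-turbine block, $L_{g_i}\hat{h}_{\rm h}$ vanishes trivially; moreover, by inspection of the structure of $f_{\rm h3}$ in Appendix~\ref{sec:model_equations}, $L_{g_i} L_f \hat{h}_{\rm h}$ also vanishes, because the control enters $x_{\rm g}$ through fuel-valve channels that do not couple instantaneously to $f_{\rm h3}$. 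At the next order, $L_{g_i} L_f^2 \hat{h}_{\rm h}$ becomes nonzero, yielding relative degree $3$ for $\hat{h}_{\rm h}$.

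Next, I would verify that the $2\times 2$ decoupling matrix with rows $[L_{g_1}L_f^4 h_{\rm e},\ L_{g_2}L_f^4 h_{\rm e}]$ and $[L_{g_1}L_f^2 \hat{h}_{\rm h},\ L_{g_2}L_f^2 \hat{h}_{\rm h}]$ is nonsingular. This is the step I expect to be the main obstacle: each row has already been shown to be nonzero individually, but one must rule out linear dependence between the two rows on a nonempty open set. My expectation is that the electric chain of Lie derivatives terminates against the mechanical-power channel of $g_{{\rm g}i}$, whereas the heat chain terminates against a distinct channel associated with the heat-recovery coupling; hence the decoupling matrix is effectively triangular (or at least has nonvanishing determinant) on a nonempty open set, which is then taken as $\hat{D}$.

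Finally, to conclude that $\hat{\Phi}$ is a diffeomorphism on $\hat{D}$, I would note that $\dim\xi_{\rm e}+\dim\hat{\xi}_{\rm h}+\dim\hat{\eta}=5+3+5=13$ matches the state dimension, so it suffices to check that the Jacobian $\partial\hat{\Phi}/\partial x$ has full rank. The relative-degree property already guarantees linear independence of the differentials of the entries of $\xi_{\rm e}$ and $\hat{\xi}_{\rm h}$, and the remaining five coordinates $\hat{\eta}=(x_{\rm g3},x_{\rm e1},x_{\rm e2},x_{\rm h1},x_{\rm h2})$ are chosen precisely so as to pick up the state directions not yet spanned by the $\xi$-coordinates, which becomes transparent from a blockwise inspection of the Jacobian. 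The Inverse Function Theorem then yields the claimed diffeomorphism, restricted to the open set $\hat{D}$ where all the above nondegeneracy conditions hold simultaneously.
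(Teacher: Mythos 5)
Your overall route is the same as the paper's: carry over the relative-degree-5 chain for $h_{\rm e}$, show $L_{g_i}\hat h_{\rm h}\equiv L_{g_i}L_f\hat h_{\rm h}\equiv 0$ and $L_{g_i}L_f^2\hat h_{\rm h}\neq 0$ to get relative degree $3$, then obtain $\hat D$ as the intersection of the set where the decoupling matrix $\hat{\sf A}(x)$ is nonsingular with the set where ${\rm D}\hat\Phi(x)$ is nonsingular. Your computation of the orders is correct (the inputs enter only the valve states $x_{\rm g1},x_{\rm g4}$, while $L_f\hat h_{\rm h}$ depends only on $x_{\rm g2},x_{\rm g5}$, so one more differentiation is needed). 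However, at the two decisive steps you substitute structural expectations for computations, and one of them is wrong: the decoupling matrix is \emph{not} effectively triangular. Its heat row $[L_{g_1}L_f^2\hat h_{\rm h},\,L_{g_2}L_f^2\hat h_{\rm h}]$ consists of two nonzero \emph{constants} (both inputs reach $x_{\rm h3}$ at order $3$ through the fuel-flow chains $x_{{\rm g}1}\to x_{{\rm g}2}$ and $x_{{\rm g}4}\to x_{{\rm g}5}$), while the electric row is proportional to $\bigl(\partial P_{\rm e\infty}/\partial x_{\rm e1},\,\partial P_{\rm e\infty}/\partial x_{\rm e3}\bigr)$; hence
\begin{align*}
{\rm det}\hat{\sf A}(x)= -\hat{A}_1 \dfrac{{\rm d}P_{\rm \infty1}}{{\rm d}x_{\rm e1}}(x_{\rm e1}) +\hat{A}_2 \dfrac{{\rm d}P_{\rm 2\infty}}{{\rm d}x_{\rm e3}}(x_{\rm e3}),
\end{align*}
which genuinely vanishes on a nontrivial locus in $(x_{\rm e1},x_{\rm e3})$. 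The existence of $\hat D_1$ follows only because this expression is not identically zero, which must be exhibited (the paper does so by explicit, symbolically computed Lie derivatives), not inferred from a triangularity argument.

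The second gap is the diffeomorphism step. The vector relative degree guarantees independence of the differentials of the $\xi_{\rm e}$- and $\hat\xi_{\rm h}$-entries and the existence of \emph{some} complementary coordinates, but not that the specific choice $\hat\eta=(x_{\rm g3},x_{\rm e1},x_{\rm e2},x_{\rm h1},x_{\rm h2})$ completes them; since the $\xi$-blocks themselves depend jointly on $x_{\rm g}$, $x_{\rm e}$ (and, for $\hat\xi_{\rm h}$, on the gas-turbine states), the Jacobian is not block-triangular in any obvious way, so ``blockwise inspection'' is not a proof. The paper closes this by computing ${\rm det}\,{\rm D}\hat\Phi(x)$ in closed form as a product of factors built from ${\rm d}P_{\infty1}/{\rm d}x_{\rm e1}$ and ${\rm d}P_{2\infty}/{\rm d}x_{\rm e3}$, observing it is not identically zero, and taking $\hat D:=\hat D_1\cap\hat D_2$. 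To complete your argument you would need to carry out (or cite) these two determinant computations; the skeleton of your proof is otherwise the paper's.
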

While the above lemma is proven in \cite{scipaper17-en}, for self-consistency of the paper, we provide the proof in Appendix~\ref{sec:proof_redefined_normal_form}.
From Lemma~\ref{lem:redefined_normal_form}, with the outputs in \eqref{eq:redefined_output}, the model \eqref{eq:system_equation} has the following normal form: 
\begin{subequations} \label{eq:normal_form}
\begin{align}
 \dot{\xi}_{\rm e1} &= \xi_{\rm e2}, \\ 
   & \hspace{2mm} \vdots \notag \\
 \dot{\xi}_{\rm e5} &= b_1(\xi_{\rm e},\, \hat{\xi}_{\rm h},\, \hat{\eta}) + \sum_{i=1}^2 a_{1i}(\xi_{\rm e},\, \hat{\xi}_{\rm h},\, \hat{\eta}) u_i, \\ 
 \dot{\hat{\xi}}_{\rm h1} &= \hat{\xi}_{\rm h2}, \\ 
  & \hspace{2mm} \vdots \notag \\
 \dot{\hat{\xi}}_{\rm h3} &= b_2(\xi_{\rm e},\, \hat{\xi}_{\rm h},\, \hat{\eta}) + \sum_{i=1}^2 a_{2i}(\xi_{\rm e},\, \hat{\xi}_{\rm h},\, \hat{\eta}) u_i, \\ 
\dot{\hat{\eta}} &= \hat{q}(\xi_{\rm e},\, \hat{\xi}_{\rm h},\, \hat{\eta}) \label{eq:internal_dynamics_redefined}
\end{align}
\end{subequations}
where $\xi_{{\rm e}i}$ and $\hat{\xi}_{{\rm h}i}$ are given by $\xi_{\rm e}=[\xi_{\rm e1},\,\dots,\,\xi_{\rm e5}]^\top$ and $\hat{\xi}_{\rm h}=[\hat{\xi}_{\rm h1},\,\hat{\xi}_{\rm h2},\,\hat{\xi}_{\rm h3}]^\top$, and the functions $b_1$, $b_2$, $a_{11},\dots a_{22}$ are defined as follows: for $i, j=1,2$, 
\begin{subequations}
\begin{align}
 b_1 &:= (L_f^5 h_{\rm e}\circ \hat{\Phi}^{-1}), \\
 b_2 &:= (L_f^3 \hat{h}_{\rm h}\circ \hat{\Phi}^{-1}), \\ 
 \{ a_{ij}\}  &= \hat{{\sf A}} \notag
 \\ & := 
	\begin{bmatrix} L_{g_1}L_f^4 h_{\rm e} \circ \hat{\Phi}^{-1} &L_{g_2}L_f^4 h_{\rm e} \circ \hat{\Phi}^{-1}\\
 L_{g_1} L_f^2 \hat{h}_{\rm h} \circ \hat{\Phi}^{-1} & L_{g_2} L_f^2 \hat{h}_{\rm h} \circ \hat{\Phi}^{-1} \end{bmatrix}.
\end{align}
\end{subequations}
and $\hat{q}$ by
\begin{align}
 \hat{q}(\xi_{\rm e},\, \hat{\xi}_{\rm h},\, \hat{\eta}) := (\hat{r}\circ \hat{\Phi}^{-1})(\xi_{\rm e},\, \hat{\xi}_{\rm h},\, \hat{\eta})
\end{align}
with
\begin{align} \label{eq:function_q_redefined}
 \hat{r}(\cdot) =\begin{bmatrix}
 f_{\rm g3}(\cdot),\, f_{\rm e1}(\cdot),\, f_{\rm e2}(\cdot),\,  f_{\rm h1}(\cdot), \, f_{\rm h2}(\cdot) \end{bmatrix}^\top. 
\end{align}

As well as in \secref{sec:analysis_non_minimum_phase}, the following lemma holds for the internal dynamics:
\begin{lem} \label{th:internal_dynamics}
 The internal dynamics \eqref{eq:internal_dynamics_redefined} are independent of the variable $\hat{\xi}_{\rm h1}$. 
\end{lem}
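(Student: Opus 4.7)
The plan is to mirror the argument behind Lemma~\ref{lem:independence_eta4}, since the present statement is its natural analog in the redefined coordinates. The key structural fact is that in the original $x$-coordinates the drift vector field $f$ does not depend on the averaged pressure level $x_{\rm h3}$: the blocks $f_{\rm g}$ and $f_{\rm e}$ involve only the gas turbine and electric subsystem variables, and $f_{\rm h}$ is independent of $x_{\rm h3}$ by the same model property that underpins Lemma~\ref{thm:NHIM} (see the remark following that lemma and Appendix~\ref{sec:model_equations}).

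First I would verify that the vector-valued function $\hat{r}$ defined in \eqref{eq:function_q_redefined} is independent of $x_{\rm h3}$. This is immediate from its components: $f_{\rm g3}$, $f_{\rm e1}$, $f_{\rm e2}$ involve no heat variables at all, while $f_{\rm h1}$ and $f_{\rm h2}$ are independent of $x_{\rm h3}$ by the heat-subsystem structure invoked in Lemma~\ref{thm:NHIM}.

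Next I would show that $x_{\rm h3}$ enters the coordinate transformation $\hat{\Phi}$ only through the single output coordinate $\hat{\xi}_{\rm h1} = \hat{h}_{\rm h}(x) = x_{\rm h3}$. The entries of $\hat{\eta}$ listed in \eqref{eq:eta_bar} are explicit state variables other than $x_{\rm h3}$. The coordinates $\xi_{\rm e}$ are iterated Lie derivatives of $h_{\rm e} = P_{\rm e\infty}(x_{\rm e1}, x_{\rm e3})$, and since both $h_{\rm e}$ and $f$ are independent of $x_{\rm h3}$, a short induction on the order of derivative yields $\partial L_f^k h_{\rm e}/\partial x_{\rm h3} = 0$ for each $k$. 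The same induction handles $\hat{\xi}_{\rm h2}$ and $\hat{\xi}_{\rm h3}$: $L_f \hat{h}_{\rm h} = f_{\rm h3}$ is independent of $x_{\rm h3}$ by the same property of $f_{\rm h}$, and higher Lie derivatives inherit this.

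Consequently $\hat{\Phi}$ has a block-triangular form in which $\hat{\xi}_{\rm h1}$ coincides with $x_{\rm h3}$ and every other new coordinate is a function of $(x_{\rm g}, x_{\rm e}, x_{\rm h1}, x_{\rm h2})$ alone. Its inverse therefore recovers $x_{\rm h3} = \hat{\xi}_{\rm h1}$ and determines every other component of $x$ without invoking $\hat{\xi}_{\rm h1}$. Composing with $\hat{r}$, which does not use $x_{\rm h3}$, yields $\hat{q} = \hat{r} \circ \hat{\Phi}^{-1}$ independent of $\hat{\xi}_{\rm h1}$ by the chain rule. The only delicate point I anticipate is the induction establishing that each iterated Lie derivative appearing in $\hat{\Phi}$ inherits the $x_{\rm h3}$-independence of $f$; once that is verified, the chain-rule conclusion is routine.
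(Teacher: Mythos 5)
Your proposal is correct and follows essentially the same route as the paper's own proof: both show that $\hat{r}$ involves no $x_{\rm h3}$, that $x_{\rm h3}$ enters the transformation only through $\hat{\xi}_{\rm h1}=x_{\rm h3}$ while $\xi_{\rm e}$, $\hat{\xi}_{\rm h2}$, $\hat{\xi}_{\rm h3}$, $\hat{\eta}$ depend only on $(x_{\rm g},x_{\rm e},x_{\rm h1},x_{\rm h2})$, and that the resulting reduced transformation is invertible on $\hat{D}$ (inherited from $\det{\rm D}\hat{\Phi}\neq 0$), so $\hat{q}=\hat{r}\circ\hat{\Phi}^{-1}$ cannot depend on $\hat{\xi}_{\rm h1}$. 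Your explicit induction on the Lie derivatives merely fills in the dependence claims the paper states without detail.
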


\begin{proof}
See Appendix~\ref{sec:proof_internal_dynamics}. \qed 
\end{proof}

Thus, the minimum phase property under the outputs in \eqref{eq:redefined_output} does not depend on the reference $\hat{Y}_2^{\rm ref}$ of the output $\hat{y}_2$ and is characterized by the following theorem: 

\begin{theorem} \label{thm:minimum_phase}
 Consider the system \eqref{eq:system_equation} with the outputs in \eqref{eq:redefined_output}. 
Suppose that for given references $Y_1^{\rm ref}$ and $\hat{Y}_2^{\rm ref}$, there exists an equilibrium point $x^\ast$ of \eqref{eq:system_equation} satisfying $h_{\rm e}(x^\ast)=Y_1^{\rm ref}$ and $\hat{h}_{\rm h}(x^\ast)= \hat{Y}_2^{\rm ref}$. 
Then, the system is minimum phase at $x^\ast$ if (i) $x^\ast$ exists in the open set $\hat{D}$ stated by Lemma\,\ref{lem:redefined_normal_form}, and (ii) all the eigenvalues of the matrix ${\sf Q}$ given by 
\begin{align} \label{eq4:matrix_Q}
  {\sf Q} :=\dfrac{\partial \hat{q}}{\partial \hat{\eta}}([Y_1^{\rm ref},\, 0,\,\dots,\,0 ]^\top,\, [0,\, 0,\, 0]^\top,\, \eta^{\ast}), 
\end{align}
have negative real parts.
\end{theorem}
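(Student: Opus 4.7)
The plan is to show that the zero dynamics associated with the redefined output in \eqref{eq:redefined_output} admit an asymptotically stable equilibrium corresponding to $x^\ast$, which is precisely the defining condition of the minimum phase property at $x^\ast$. Condition (i) is what guarantees that the normal form of Lemma~\ref{lem:redefined_normal_form} is valid in a neighborhood of $x^\ast$, so that the notion of zero dynamics at $x^\ast$ is well-defined; condition (ii) will then be used to invoke Lyapunov's indirect method on the resulting reduced-order system.

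First I would write down the zero dynamics explicitly. With the set-point error $e_{\rm s} = [y_1 - Y_1^{\rm ref},\, \hat{y}_2 - \hat{Y}_2^{\rm ref}]^\top$ held identically at zero and its time derivatives vanishing, the $\xi_{\rm e}$ and $\hat{\xi}_{\rm h}$ coordinates are forced to the constant vectors $\xi_{\rm e}^{\rm ref}:=[Y_1^{\rm ref},0,0,0,0]^\top$ and $\hat{\xi}_{\rm h}^{\rm ref}:=[\hat{Y}_2^{\rm ref},0,0]^\top$, and the internal dynamics \eqref{eq:internal_dynamics_redefined} reduce to $\dot{\hat{\eta}} = \hat{q}(\xi_{\rm e}^{\rm ref},\hat{\xi}_{\rm h}^{\rm ref},\hat{\eta})$. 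By Lemma~\ref{th:internal_dynamics}, $\hat{q}$ does not depend on its $\hat{\xi}_{\rm h1}$ slot, so I may replace $\hat{\xi}_{\rm h}^{\rm ref}$ by $[0,0,0]^\top$ without changing the vector field. Next, I would verify that $\hat{\eta}^\ast$, defined as the projection of $\hat\Phi(x^\ast)$ onto the last factor of \eqref{eq:coordinate_transformation_redefined}, is an equilibrium of these zero dynamics: since $x^\ast$ is an equilibrium of \eqref{eq:system_equation} with $h_{\rm e}(x^\ast)=Y_1^{\rm ref}$ and $\hat{h}_{\rm h}(x^\ast)=\hat{Y}_2^{\rm ref}$, all Lie derivatives $L_f^k h_{\rm e}(x^\ast)$ and $L_f^k \hat{h}_{\rm h}(x^\ast)$ for $k\ge 1$ vanish, so $\hat\Phi(x^\ast) = (\xi_{\rm e}^{\rm ref},\hat{\xi}_{\rm h}^{\rm ref},\hat{\eta}^\ast)$; because the internal dynamics \eqref{eq:internal_dynamics_redefined} carry no explicit dependence on the control $u$, the equilibrium condition on the full system forces $\hat{q}(\xi_{\rm e}^{\rm ref},\hat{\xi}_{\rm h}^{\rm ref},\hat{\eta}^\ast)=0$.

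Having established that $\hat{\eta}^\ast$ is an equilibrium, the Jacobian of the zero dynamics evaluated there is exactly the matrix $\mathsf{Q}$ defined in \eqref{eq4:matrix_Q}, again using that the first coordinate of $\hat{\xi}_{\rm h}$ is inactive. Under condition (ii), all eigenvalues of $\mathsf{Q}$ have negative real parts, so Lyapunov's indirect method (Theorem 4.7 in a standard nonlinear control text) implies that $\hat{\eta}^\ast$ is locally exponentially stable for the zero dynamics. This is precisely the requirement for \eqref{eq:system_equation} with the redefined output to be minimum phase at $x^\ast$, completing the argument.

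I anticipate that the main technical point, rather than any deep obstacle, is the careful use of Lemma~\ref{th:internal_dynamics} to reconcile the two different evaluations of $\hat{q}$: the zero dynamics of interest use $\hat{\xi}_{\rm h}=\hat{\xi}_{\rm h}^{\rm ref}$, whereas the matrix $\mathsf{Q}$ in the statement uses $\hat{\xi}_{\rm h}=[0,0,0]^\top$. Independence of $\hat{q}$ from $\hat{\xi}_{\rm h1}$ makes these coincide, and in particular makes the minimum-phase conclusion independent of the chosen reference $\hat{Y}_2^{\rm ref}$, which is exactly the structural feature the output redefinition was introduced to exploit. Everything else is a routine application of the input-output linearization machinery already set up in Lemma~\ref{lem:redefined_normal_form} and the preceding discussion.
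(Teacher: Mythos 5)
Your proposal is correct and takes essentially the same route as the paper, which simply states that the result ``follows directly from the definition of minimum phase system, Lemmas~\ref{lem:redefined_normal_form} and \ref{th:internal_dynamics}'' and omits the details you have filled in (zero dynamics of the redefined output, $\hat{\eta}^\ast$ an equilibrium thereof, Jacobian equal to ${\sf Q}$ via the $\hat{\xi}_{\rm h1}$-independence of Lemma~\ref{th:internal_dynamics}, then Lyapunov's indirect method). One small imprecision: the Lie derivatives $L_f^k h_{\rm e}(x^\ast)$ and $L_f^k \hat{h}_{\rm h}(x^\ast)$ vanish only for $k$ up to the respective relative degree minus one (because $L_{g_i}L_f^k h_{\rm e}\equiv 0$, $L_{g_i}L_f^k \hat{h}_{\rm h}\equiv 0$ below that order together with the equilibrium condition $f(x^\ast)+\sum_i g_i(x^\ast)u_i^\ast=0$), not for all $k\ge 1$; this is exactly what the transformation $\hat{\Phi}$ requires, so the argument is unaffected.
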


\begin{proof}
The proof follows directly from the definition of minimum phase system, Lemmas~\ref{lem:redefined_normal_form} and \ref{th:internal_dynamics}. 
The detail is omitted.  \qed
\end{proof}

\subsection{Synthesis of tracking controller}

Here, we synthesize the controller in \figref{fig:control_scheme}. 
First, for the reference generation, we consider the following system described by $\sigma_1$ and $\sigma_2$: 
\begin{subequations} \label{eq:reference_generation}
\begin{align}
 \dot{\sigma}_1 &= K\sigma_2,  \\
 \dot{\sigma}_2 &= h_{\rm h}(\hat{\eta})-Y_{\rm 2}^{\rm ref} 
\end{align}
\end{subequations}
where $K$ is a feedback gain, and the function $h_{\rm h}$ stands for the heat transfer rate as given by \eqref{eq:output}.
Here, we abuse the notation of $h_{\rm h}$ and regard it as a function of the variable $\hat{\eta}$ describing the internal dynamics of the system with respect to the (redefined) outputs given by \eqref{eq:redefined_output}.  
This is consistent with the output redefinition method in \cite{fiorentini12} in that the regulated output is converted into a state trajectory of new zero dynamics. 
By using the system \eqref{eq:reference_generation}, the reference $\hat{y}_2^{\rm ref}$ is given as 
\begin{align} \label{eq:reference}
 \hat{y}_{\rm 2}^{\rm ref}(t) &= \sigma_1(t). 
\end{align}
To track the references $y_1^{\rm ref}$ and $\hat{y}_2^{\rm ref}$ for the virtual outputs, various design tools are applicable.
Here, for simplicity, we utilize a standard tracking controller \cite{sastry99} by pole placement through input-output linearization: 
\begin{align} 
\label{eq:control_law} 
 \begin{bmatrix} u_1 \\ u_2 \end{bmatrix}= 
 \hat{{\sf A}}^{-1}(x) 
 \begin{bmatrix}
  -L_f^5 h_{\rm e}(x) \displaystyle{-\sum_{j=1}^5} \alpha_{{\rm e}j} \tilde{\xi}_{{\rm e}i}(t) \\  
  -L_f^3\hat{h}_{\rm h}(x) \displaystyle{-\sum_{j=1}^3} \alpha_{{\rm h}j} \tilde{{\xi}}_{{\rm h}i}(t) 
 \end{bmatrix}
\end{align}
with $\tilde{\xi}_{\rm e}$ and $\tilde{\xi}_{\rm h}$ given by 
\begin{subequations} \label{eq:error_variables}
\begin{align} 
 \tilde{\xi}_{\rm e} &:= 
 \begin{bmatrix}
  \xi_{\rm e1}-Y_{\rm 1}^{\rm ref},\, \xi_{\rm e2}, \,\dots,\, \xi_{\rm e5} 
 \end{bmatrix}^\top \\
 \tilde{{\xi}}_{\rm h} &:= 
 \begin{bmatrix}
  \hat{\xi}_{\rm h1} -\sigma_1 \\
  \hat{\xi}_{\rm h2} -K\sigma_2 \\
  \hat{\xi}_{\rm h3} -K(h_{\rm h}(\hat{\eta})-Y_{\rm 2}^{\rm ref}) 
 \end{bmatrix}, 
\end{align}
\end{subequations}
where the coefficients $\alpha_{{\rm e}i}$ and $\alpha_{{\rm h}i}$ are chosen so that the polynomials  $s^5+\alpha_{\rm e5}s^4+\dots+\alpha_{\rm e1}$ and $s^3+\alpha_{\rm h3}s^2+\dots+\alpha_{\rm h1}$ are Hurwitz.  
Then, we obtain the following theorem as the main result, stating that the precision tracking is achieved for constant references of the outputs: 
\begin{theorem} \label{thm:controller}
 Consider the model \eqref{eq:system_equation} with the control law \eqref{eq:control_law}. 
 Suppose that for given (constant) references $Y_1^{\rm ref}$ and $Y_2^{\rm ref}$, there exists a solution $(\hat{\eta}^{\rm ref},\, \sigma_2^{\rm ref})$ of the following equations:
\begin{subequations} \label{eq:assumptions_controller}
\begin{align} 
 &q([Y_{\rm 1}^{\rm ref},\, 0, \dots,\,  0]^\top,\, [0,\, K\sigma_2^{\rm ref},\, 0]^\top,\, \hat{\eta}^{\rm ref})=0, \\
 &h_{\rm h}(\hat{\eta}^{\rm ref}) - Y_2^{\rm ref} =0. 
\end{align}
\end{subequations}
Furthermore, suppose that for the open set $\hat{D}$ stated in Lemma\,\ref{lem:redefined_normal_form}, the following condition holds:
\begin{align}  \label{eq:d_hat}
 ([Y_1^{\rm ref},\,0,\dots,\,0]^\top,\,[0,\, K\sigma_2^{\rm ref},\,0]^\top,\, \hat{\eta}^{\rm ref} ) \in \hat{D}.
\end{align}
Then, there exists a trajectory of the closed-loop system satisfying 
\begin{align} \label{eq:closed-loop_trajectory}
 & (\xi_{\rm e}(t),\, \hat{\xi}_{\rm h}(t),\, \hat{\eta}(t)) \notag \\ & 
 =([Y_1^{\rm ref},\,0,\,\dots,\,0]^\top,\, [\sigma_1(0)+(K\sigma_2^{\rm ref})t,\,K\sigma_2^{\rm ref},\,0]^\top,\, \hat{\eta}^{\rm ref}).
\end{align} 
In addition to the existence of trajectory, for the trajectory starting from a point sufficiently close to \eqref{eq:closed-loop_trajectory}, the following equation holds: 
\begin{align}
 \lim_{t\to\infty} (y_1(t),\, y_2(t)) = (Y_1^{\rm ref},\, Y_2^{\rm ref}), 
\end{align}
if all of the eigenvalues of the matrix given by 
 \begin{align} \label{eq:matrix_determining_stability}
 \begin{bmatrix}
  \tilde{\sf Q}+ K B_3 C& K B_2\\
  C & 0
 \end{bmatrix} 
 \end{align}
have negative real parts, where $\tilde{{\sf Q}}$, $B_2$, $B_3$, $C$ are given by
\begin{subequations}
\begin{align} \label{eq:tilde_q}
 \tilde{\sf Q} &:= \dfrac{\partial \hat{q}}{\partial \hat{\eta} }([Y_{\rm 1}^{\rm ref},\, 0, \dots,\,  0]^\top,\, [0,\, K\sigma_2^{\rm ref},\, 0]^\top,\, \hat{\eta}^{\rm ref}), \\
 B_2 &:= \dfrac{\partial \hat{q}}{\partial \hat{\xi}_{\rm h2}}([Y_{\rm 1}^{\rm ref},\, 0, \dots,\,  0]^\top,\, [0,\, K\sigma_2^{\rm ref},\, 0]^\top,\, \hat{\eta}^{\rm ref}), \\  
 B_3 &:=\dfrac{\partial \hat{q}}{\partial \hat{\xi}_{\rm h3}}([Y_{\rm 1}^{\rm ref},\, 0, \dots,\,  0]^\top,\, [0,\,  K\sigma_2^{\rm ref},\, 0]^\top,\, \hat{\eta}^{\rm ref}), \\
 C  &:= \dfrac{\partial h_{\rm h}}{\partial \hat{\eta}}([Y_{\rm 1}^{\rm ref},\, 0, \dots,\,  0]^\top,\, [0,\, K\sigma_2^{\rm ref},\, 0]^\top,\, \hat{\eta}^{\rm ref}). 
\end{align}
\end{subequations}
\end{theorem}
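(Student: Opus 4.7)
I would split the argument into (a) existence of the claimed trajectory \eqref{eq:closed-loop_trajectory} and (b) local convergence of the outputs to their references, both worked in the normal-form coordinates of Lemma~\ref{lem:redefined_normal_form} augmented by the controller states $(\sigma_1,\sigma_2)$.

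For part (a) I would substitute the candidate trajectory directly into the closed loop. Plugging \eqref{eq:control_law} into the normal form \eqref{eq:normal_form} cancels $b_1$ and $b_2$ and reduces the bottom-row equations to $\dot{\xi}_{{\rm e}5}=-\sum_j\alpha_{{\rm e}j}\tilde{\xi}_{{\rm e}j}$ and $\dot{\hat{\xi}}_{{\rm h}3}=-\sum_j\alpha_{{\rm h}j}\tilde{\xi}_{{\rm h}j}$. On the candidate, $\tilde{\xi}_{\rm e}\equiv 0$ and $\tilde{\xi}_{\rm h}\equiv 0$ by \eqref{eq:error_variables}, which is consistent with the shift equations in \eqref{eq:normal_form} and the linear growth $\sigma_1(t)=\sigma_1(0)+K\sigma_2^{\rm ref}t$. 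The first equation of \eqref{eq:assumptions_controller} gives $\dot{\hat{\eta}}=\hat{q}=0$ at $\hat{\eta}^{\rm ref}$, the second gives $\dot{\sigma}_2=h_{\rm h}(\hat{\eta}^{\rm ref})-Y_2^{\rm ref}=0$, and assumption \eqref{eq:d_hat} keeps the trajectory inside the chart $\hat{D}$.

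For part (b) I would linearize the closed loop along \eqref{eq:closed-loop_trajectory} in the error coordinates $(\tilde{\xi}_{\rm e},\tilde{\xi}_{\rm h},\delta\hat{\eta},\delta\sigma_1,\delta\sigma_2)$. The $\delta\tilde{\xi}_{\rm e}$-subsystem decouples and is governed by the Hurwitz companion matrix of $s^5+\alpha_{{\rm e}5}s^4+\cdots+\alpha_{{\rm e}1}$. By Lemma~\ref{th:internal_dynamics} the internal vectorfield $\hat{q}$ is independent of $\hat{\xi}_{{\rm h}1}$, so $\delta\sigma_1$ enters nowhere else and behaves as a neutral integrator driven by $\delta\sigma_2$ that does not affect the outputs. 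Expanding $\delta\hat{q}$ via the chain rule through $\hat{\xi}_{{\rm h}2}=\tilde{\xi}_{{\rm h}2}+K\sigma_2$ and $\hat{\xi}_{{\rm h}3}=\tilde{\xi}_{{\rm h}3}+K(h_{\rm h}(\hat{\eta})-Y_2^{\rm ref})$ and evaluating at the reference shows that on the slice $\delta\tilde{\xi}_{\rm h}=0$ the induced $(\delta\hat{\eta},\delta\sigma_2)$-block is exactly the matrix \eqref{eq:matrix_determining_stability}, while the $\delta\tilde{\xi}_{\rm h}$-dynamics are a perturbation of the Hurwitz companion matrix of $s^3+\alpha_{{\rm h}3}s^2+\cdots+\alpha_{{\rm h}1}$. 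A cascade or vanishing-perturbation argument, viewing $\delta\tilde{\xi}_{\rm h}$ as an exponentially decaying disturbance to the $(\delta\hat{\eta},\delta\sigma_2)$ subsystem whose nominal linearization is \eqref{eq:matrix_determining_stability} and hence exponentially stable by hypothesis, then yields local exponential decay of the composite error. From this $y_1=\xi_{{\rm e}1}\to Y_1^{\rm ref}$ follows via $\tilde{\xi}_{{\rm e}1}\to 0$, and $y_2=h_{\rm h}(\hat{\eta})\to h_{\rm h}(\hat{\eta}^{\rm ref})=Y_2^{\rm ref}$ from $\delta\hat{\eta}\to 0$ together with the second equation of \eqref{eq:assumptions_controller}.

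The principal technical difficulty is the two-way coupling within the $(\delta\tilde{\xi}_{\rm h},\delta\hat{\eta},\delta\sigma_2)$ block: the chain-rule term $-KC\,\delta\hat{q}$ appears in $\dot{\tilde{\xi}}_{{\rm h}3}$, while $\delta\tilde{\xi}_{{\rm h}2}$ and $\delta\tilde{\xi}_{{\rm h}3}$ appear in $\delta\hat{q}$, which prevents a strictly block-triangular Jacobian. A clean resolution is either an explicit Sylvester-type similarity transform that block-triangularizes the Jacobian (feasible generically because the pole-placement parameters $\alpha_{{\rm h}j}$ can be chosen so the two relevant spectra are disjoint), or a singular-perturbation / ISS cascade argument that keeps the coupling under control by invoking exponential stability of the Hurwitz-driven $\delta\tilde{\xi}_{\rm h}$ block on one side and of \eqref{eq:matrix_determining_stability} on the other.
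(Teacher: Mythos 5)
Your skeleton coincides with the paper's own proof: both verify the trajectory \eqref{eq:closed-loop_trajectory} by direct substitution using \eqref{eq:assumptions_controller} and \eqref{eq:d_hat}, pass to the error coordinates \eqref{eq:error_variables}, peel off $\sigma_1$ via Lemma~\ref{th:internal_dynamics}, and reduce convergence to the spectrum of a Jacobian whose diagonal blocks are ${\sf A}_{\rm e}$, ${\sf A}_{\rm h}$ and \eqref{eq:matrix_determining_stability}; your linearization of the $(\delta\hat{\eta},\delta\sigma_2)$ block on the slice $\delta\tilde{\xi}_{\rm h}=0$ correctly reproduces \eqref{eq:matrix_determining_stability}. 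The divergence is in how the $\tilde{\xi}_{\rm h}$-error dynamics are treated, and this is where your proposal stops short of a proof. The paper takes $\dot{\tilde{\xi}}_{\rm e}={\sf A}_{\rm e}\tilde{\xi}_{\rm e}$ and $\dot{\tilde{\xi}}_{\rm h}={\sf A}_{\rm h}\tilde{\xi}_{\rm h}$ as exact, so the reduced error system \eqref{eq:error_system} has a block lower-triangular Jacobian and the eigenvalue hypothesis on \eqref{eq:matrix_determining_stability} immediately gives local asymptotic stability and output convergence; no cascade, ISS, small-gain or similarity-transform machinery is invoked.

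The coupling you single out as the principal difficulty, the chain-rule term $-KC\,\delta\hat{q}$ in $\dot{\tilde{\xi}}_{\rm h3}$, indeed appears if one reads \eqref{eq:control_law} literally, because the law contains no feedforward of the third derivative of the reference $\sigma_1$ generated by \eqref{eq:reference_generation}; that is a fair observation about the paper. But your proposed resolutions do not close the argument under the theorem's stated hypotheses. A cascade or vanishing-perturbation argument does not apply to two-way coupling, and its quantitative version is delicate here: the coupling is $O(K)$ while the stability margin of \eqref{eq:matrix_determining_stability} itself degenerates as $K\to 0$ (at $K=0$ it has a zero eigenvalue), and the theorem assumes nothing about smallness of $K$ or spectral separation between ${\sf A}_{\rm h}$ and \eqref{eq:matrix_determining_stability}. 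A Sylvester-type block-triangularization likewise produces diagonal blocks that are perturbations of ${\sf A}_{\rm h}$ and of \eqref{eq:matrix_determining_stability}, so the assumed eigenvalue condition on \eqref{eq:matrix_determining_stability} alone no longer pins down the closed-loop spectrum. To obtain the theorem exactly as stated you must make the $\tilde{\xi}_{\rm h}$-dynamics exactly linear, as the paper does — i.e., treat the derivative of the reference component $K(h_{\rm h}(\hat{\eta})-Y_2^{\rm ref})$ as compensated by including the corresponding feedforward in the tracking law — after which your linearization argument, with the Jacobian now genuinely block triangular, completes the proof without any perturbation analysis.
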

\begin{proof}
With the variables $\tilde{\xi}_{\rm e}$ and $\tilde{{\xi}}_{\rm h}$ given by \eqref{eq:error_variables}, the closed-loop system is written as follows:
\begin{subequations} \label{eq:closed-loop}
\begin{align}
 \dot{\tilde{\xi}}_{\rm e} &= {\sf A}_{\rm e}  \tilde{\xi}_{\rm e}, \\
 \dot{\tilde{{\xi}}}_{\rm h} &= {\sf A}_{\rm h} \tilde{{\xi}}_{\rm h}, \\
 \dot{\hat{\eta}} &= \hat{q}(\tilde{\xi_{\rm e}}+[Y_{\rm 1}^{\rm ref},\, 0, \dots,\, 0]^\top, \nonumber \\& \makebox[8mm]{}
 \tilde{\xi}_{\rm h}+[\sigma_1,\, K\sigma_2,\, K(h_{\rm h}(\hat{\eta})-Y_{\rm 2}^{\rm ref})]^\top), \\
 \dot{\sigma}_1 &= K\sigma_2, \\
 \dot{\sigma}_2 &= K(h_{\rm h}(\hat{\eta})-Y_{\rm 2}^{\rm ref}),
\end{align}
\end{subequations}
where ${\sf A}_{\rm e}$ and ${\sf A}_{\rm h}$ are given by 
\begin{subequations}
\begin{align}
 {\sf A}_{\rm e} &:=
 \begin{bmatrix}
	0 & 1 & 0 & 0 & 0 \\
	0 & 0 & 1 & 0 & 0 \\
	0 & 0 & 0 & 1 & 0 \\
	0 & 0 & 0 & 0 & 1 \\
	-\alpha_{\rm e1} & -\alpha_{\rm e2} & -\alpha_{\rm e3} &  -\alpha_{\rm e4} & -\alpha_{\rm e5}
 \end{bmatrix}, \\
 {\sf A}_{\rm h} &:=
 \begin{bmatrix}
	0 & 1 & 0  \\
	0 & 0 & 1  \\
	-\alpha_{\rm h1} & -\alpha_{\rm h2} & -\alpha_{\rm h3} \\
 \end{bmatrix}.
\end{align}
\end{subequations}
From \eqref{eq:closed-loop}, it is verified that there exists a trajectory satisfying \eqref{eq:closed-loop_trajectory}. 
Furthermore, from Lemma\,\ref{th:internal_dynamics}, the dynamics of $\sigma_1$ can be separated from the rest of the system because $\sigma_1$ affects the value of $\xi_{\rm h1}$ only (see \figref{fig:error_system}). 
Thus, we have 
\begin{subequations} \label{eq:error_system} 
\begin{align} 
 \dot{\tilde{\xi}}_{\rm e} &= {\sf A}_{\rm e}  \tilde{\xi}_{\rm e}, \\
 \dot{\tilde{{\xi}}}_{\rm h} &= {\sf A}_{\rm h} \tilde{{\xi}}_{\rm h}, \\
 \dot{\hat{\eta}} &= \hat{q}(\tilde{\xi_{\rm e}}+[Y_{\rm 1}^{\rm ref},\, 0, \dots,\, 0]^\top, \nonumber \\& \makebox[8mm]{}
 \tilde{\xi}_{\rm h}+[0,\, K\sigma_2,\, K(h_{\rm h}(\hat{\eta})-Y_{\rm 2}^{\rm ref})]^\top), \\
 \dot{\sigma}_2 &= K(h_{\rm h}(\hat{\eta})-Y_{\rm 2}^{\rm ref}). 
\end{align}
\end{subequations}
For the error system \eqref{eq:error_system},  the trajectory given by \eqref{eq:closed-loop_trajectory} is an equilibrium point. 
The stability of the equilibrium point is determined  by the following matrix: 
 \begin{align} 
 \begin{bmatrix}
  {\sf A}_{\rm e} & 0 & 0 & 0 \\
  0 & {\sf A}_{\rm h} & 0 & 0 \\
  \ast  & \ast & \tilde{{\sf Q}}+ K B_3 C& K B_2\\
  \ast & \ast & C & 0
 \end{bmatrix}. 
 \end{align}
By the assumption, all of the eigenvalues of the above matrix have negative real parts, and the proof is thus 
completed. \qed
\end{proof}

\begin{figure}[t!]
\centering 
\includegraphics[width=0.6\hsize]{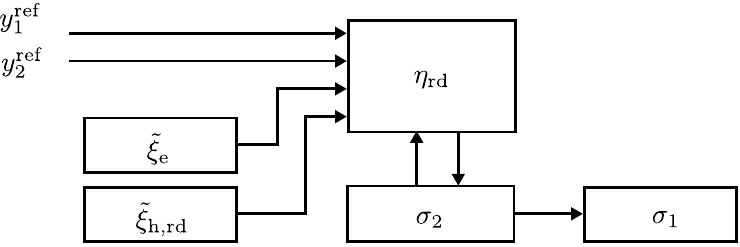}
\caption{Graphical representation of the closed-loop system}
\label{fig:error_system}
\end{figure}

\begin{rem}
For given references $Y_1^{\rm ref}$ and $Y_2^{\rm ref}$, the assumptions \eqref{eq:assumptions_controller}, \eqref{eq:d_hat} and \eqref{eq:matrix_determining_stability} in Theorem~\ref{thm:controller} can be evaluated in the original coordinate $x$ to achieve the control objective in Problem~\ref{prob:tracking}. 
In addition to the primary control objective to regulate the energy flows, the proposed controller can be utilized for stabilizing an equilibrium point by setting $K=0$ in \eqref{eq:reference_generation}. 
With $K=0$, the references of $y_1$ and $\hat{y}_2$ are given as $y_1^{\rm ref}(t)=Y_1^{\rm ref}$ and $\hat{y}_2^{\rm ref}(t)=\sigma_1(0)$, and the assumptions in Theorem~\ref{thm:controller} correspond to those of Theorem~\ref{thm:minimum_phase}.  
Thus, the control law \eqref{eq:control_law} is a standard stabilizing controller for a minimum phase system. 
\end{rem}

\section{Numerical Simulation} \label{sec:simulation}

This section demonstrates the effectiveness of the control law \eqref{eq:control_law} via numerical simulation.  
The values of the parameters are based on \cite{rowen83,machowski08,astrom00,bujak09}, and are shown in \tabref{tab:parameters}. 

\begin{figure}[t!]
\centering
{ %
\includegraphics[width=0.55\hsize]{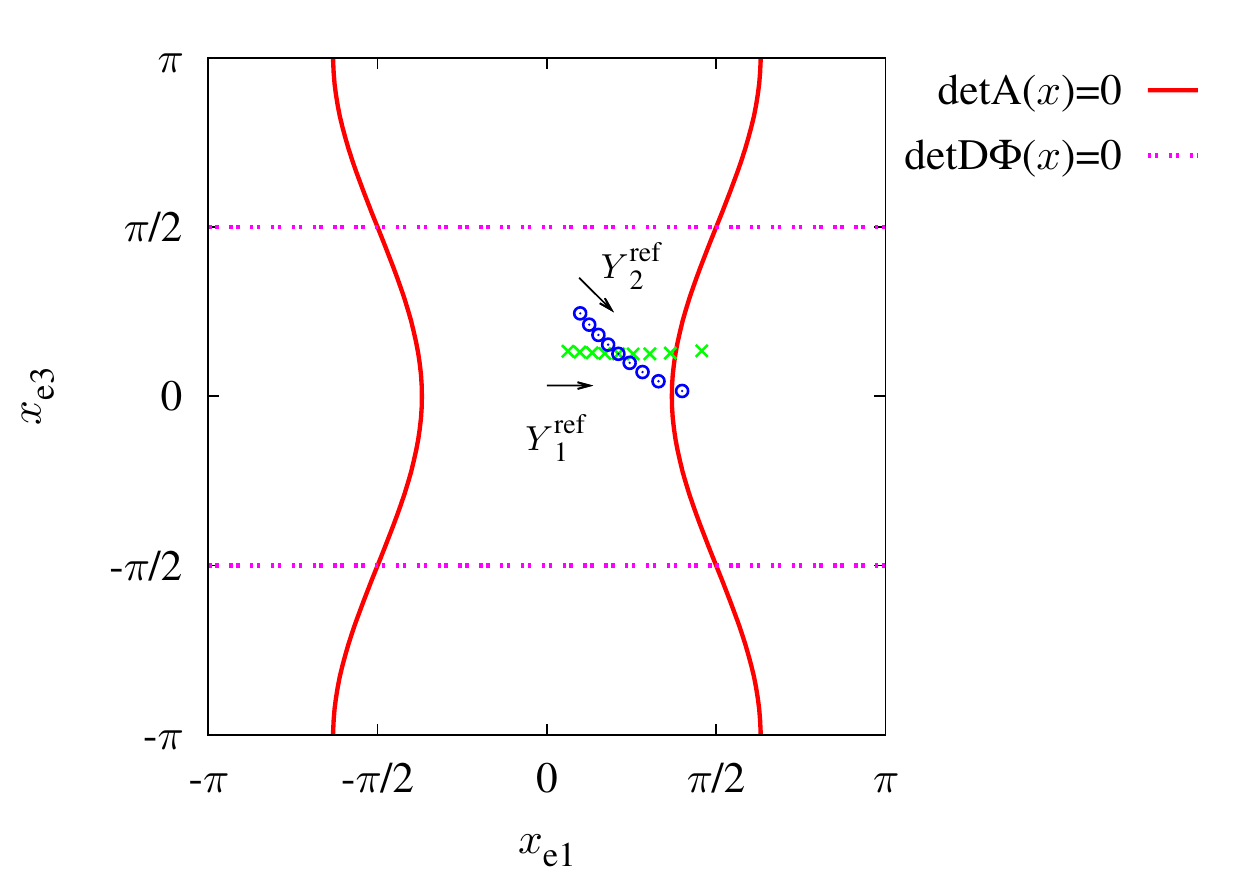} \\
\subcaption{Singularity conditions of ${\sf A}(x)$ and ${\rm D}\Phi(x)$} \label{fig:singularity}
\hspace*{2mm}\includegraphics[width=0.6\hsize]{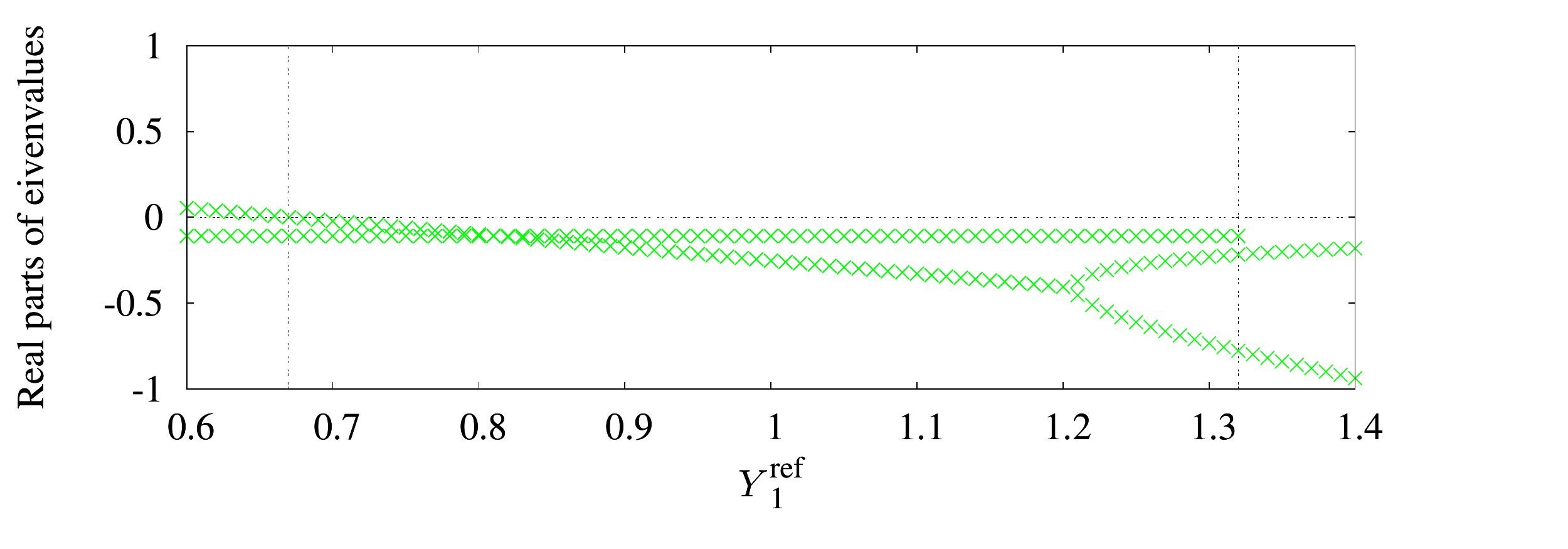}\\[-2mm]
\subcaption{%
Real parts of eigen values of ${\sf Q}$ 
} \label{fig:eigenvalues} 
\hspace*{2mm}\includegraphics[width=0.6\hsize]{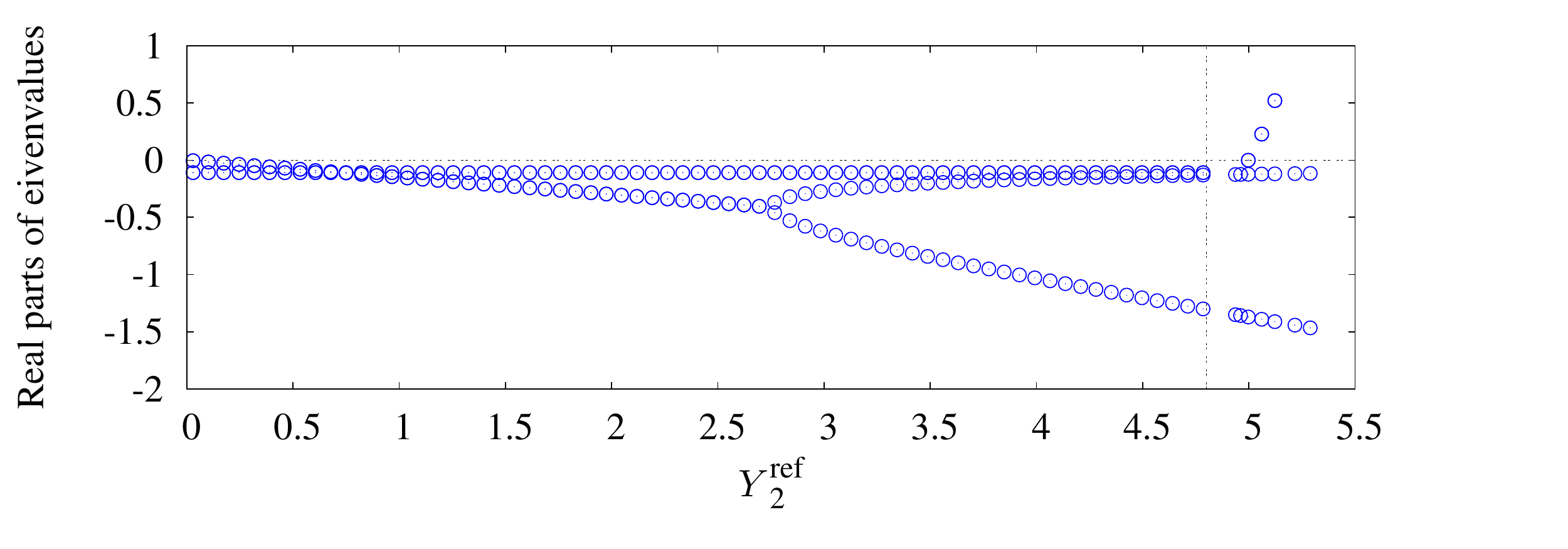}\\[-2mm]
\subcaption{Real parts of eigen values of $\tilde{\sf Q}$ 
} \label{fig:eigen_internal}
} %
\caption{Numerical simulations for checking assumptions in Theorems~\ref{thm:minimum_phase} and \ref{thm:controller} for various $Y_1^{\rm ref}$ and ${Y}_2^{\rm ref}$}
\label{fig:minimum_phase}
\end{figure}

To demonstrate the effectiveness of the controller in Theorem~\ref{thm:controller}, we first confirm the assumptions of Theorem~\ref{thm:minimum_phase} characterizing the minimum phase property of the model \eqref{eq:system_equation} with the outputs in \eqref{eq:redefined_output}. 
For this, \figref{fig:minimum_phase} shows (a) singularity conditions for $\hat{\sf A}(x)$ and ${\rm D}\hat{\Phi}(x)$ and (b)  eigenvalues of the matrix ${\sf Q}$. 
Based on \figref{fig:singularity}, the open set $\hat{D}$ stated in Lemma~\ref{lem:redefined_normal_form} can be considered.  
The \emph{solid} lines show the condition ${\rm det}\hat{{\sf A}}(x)=0$ and the \emph{broken} lines ${\rm det}{\rm D}\hat{\Phi}(x)=0$. 
Since the conditions depend only on $x_{\rm e1}$ and $x_{\rm e3}$, by defining a set $\hat{D}_{\rm e} \subset \mathbb{R}^2$ as the inside of these lines containing $(x_{\rm e1},\, x_{\rm e3})=(0,\,0)$, the set $\hat{D}$ is given as follows:
\begin{align}
 \hat{D} = \left\{ \left[x_{\rm g}^\top ,\,x_{\rm e}^\top ,\,x_{\rm h}^\top \right]^\top \in X~|~ (x_{\rm e1}, x_{\rm e3}) \in \hat{D}_{\rm e} \right\}.
\end{align}
In \figref{fig:singularity}, the points ($\times$) show the values of $x_{\rm e1}$ and $x_{\rm e3}$ on an equilibrium point under various settings of $Y_{\rm 1}^{\rm ref}$.
Note that they are independent of $Y_2^{\rm ref}$ due to Lemma~\ref{th:internal_dynamics}. 
The equilibrium points exist in the open set $\hat{D}$ when $Y_1^{\rm ref} \in [0.6,\,1.32]$. 
For the equilibrium points, it is confirmed that the matrix ${\sf Q}$ has two pairs of complex conjugate eigenvalues and one real eigenvalue for $Y_1^{\rm ref}<1.2$.
Figure~\ref{fig:eigenvalues} shows the real parts of the two sets of eigenvalues, and the real eigenvalue exists around $-10$. 
For $Y_1^{\rm ref} < 0.67$, a set of eigenvalues has positive real parts.
Thus, the model \eqref{eq:system_equation} is a minimum phase system under $Y_1^{\rm ref} \in [0.67,\,1.32]$. 

Next, we consider the assumptions of Theorem~\ref{thm:controller}. 
In \figref{fig:singularity}, the circles ($\circ$) show the values of $(x_{\rm e1},x_{\rm e3})$ for various $Y_2^{\rm ref}$ with $Y_1^{\rm ref}=1.0$. 
It is confirmed that the point given in \eqref{eq:d_hat} exists in the open set $\hat{D}$ for $Y_2^{\rm ref} \in [0,\,4.85]$. 
Furthermore, with respect to the convergence of the trajectory \eqref{eq:closed-loop_trajectory}, the eigenvalues of the matrix \eqref{eq:matrix_determining_stability} can be directly checked as conducted above for Theorem~\ref{thm:minimum_phase}.  
However, since the proposed controller is given by a simple augmentation with the integral error, here we discuss the eigenvalues of the matrix \eqref{eq:matrix_determining_stability} through the matrix $\tilde{\sf Q}$ and the feedback gain $K$. 
Particularly, for $K=0$, the eigenvalues of the matrix \eqref{eq:matrix_determining_stability} are those of $\tilde{\sf Q}$ ($={\sf Q}$) and 0. 
Thus, for $K\neq 0$, if all the eigenvalues of $\tilde{\sf Q}$ have negative real parts, it is expected to choose a small value of $K$ so that the eigenvalues of the matrix \eqref{eq:matrix_determining_stability} have negative real parts. 
Figure~\ref{fig:eigen_internal} shows the real parts of the eigenvalues of $\tilde{\sf Q}$ under $Y_1^{\rm ref}=1.0$ and various $Y_2^{\rm ref}$. 
It is confirmed that for  $Y_2^{\rm ref} < 4.85$, the all the eigenvalues has negative real parts.

\begin{figure}[t!]
\centering
{\includegraphics[width=0.6\hsize]{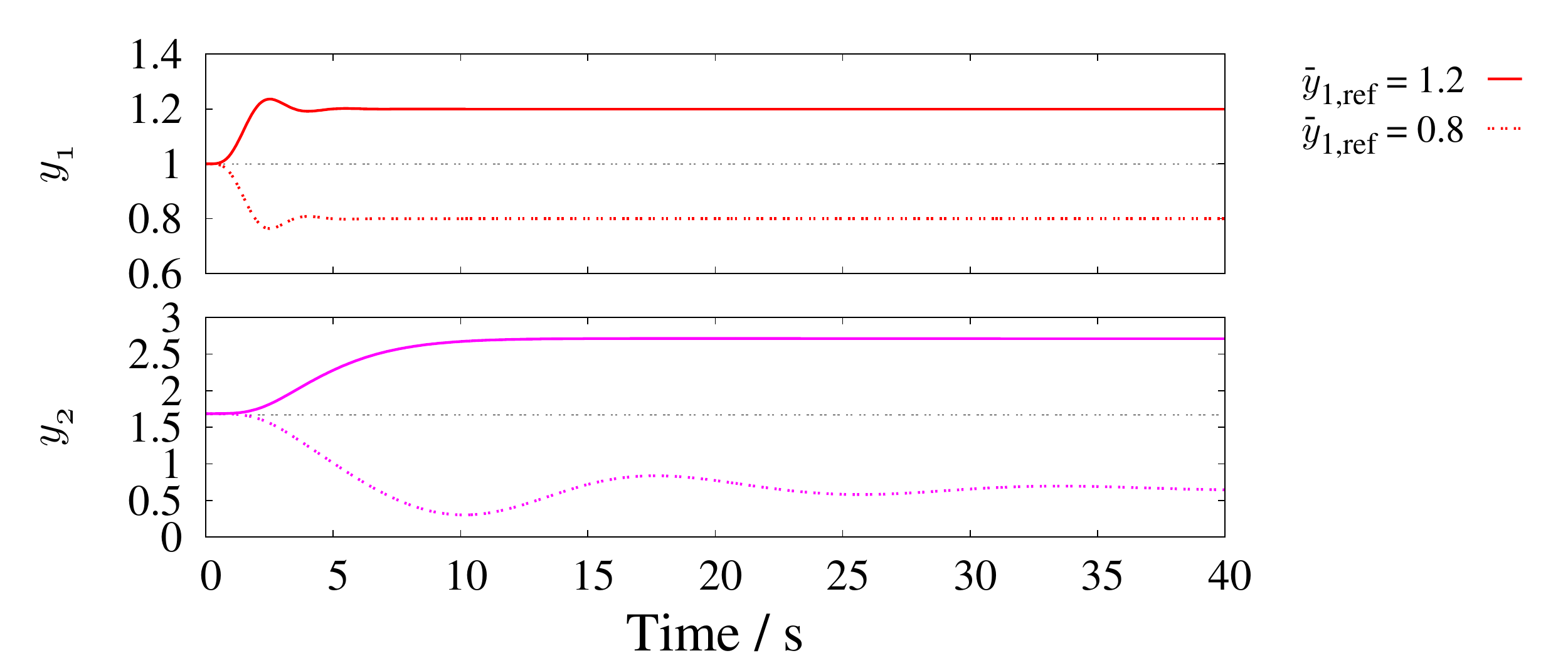}\\[-1mm]
\subcaption{Output variables}
\includegraphics[width=0.6\hsize]{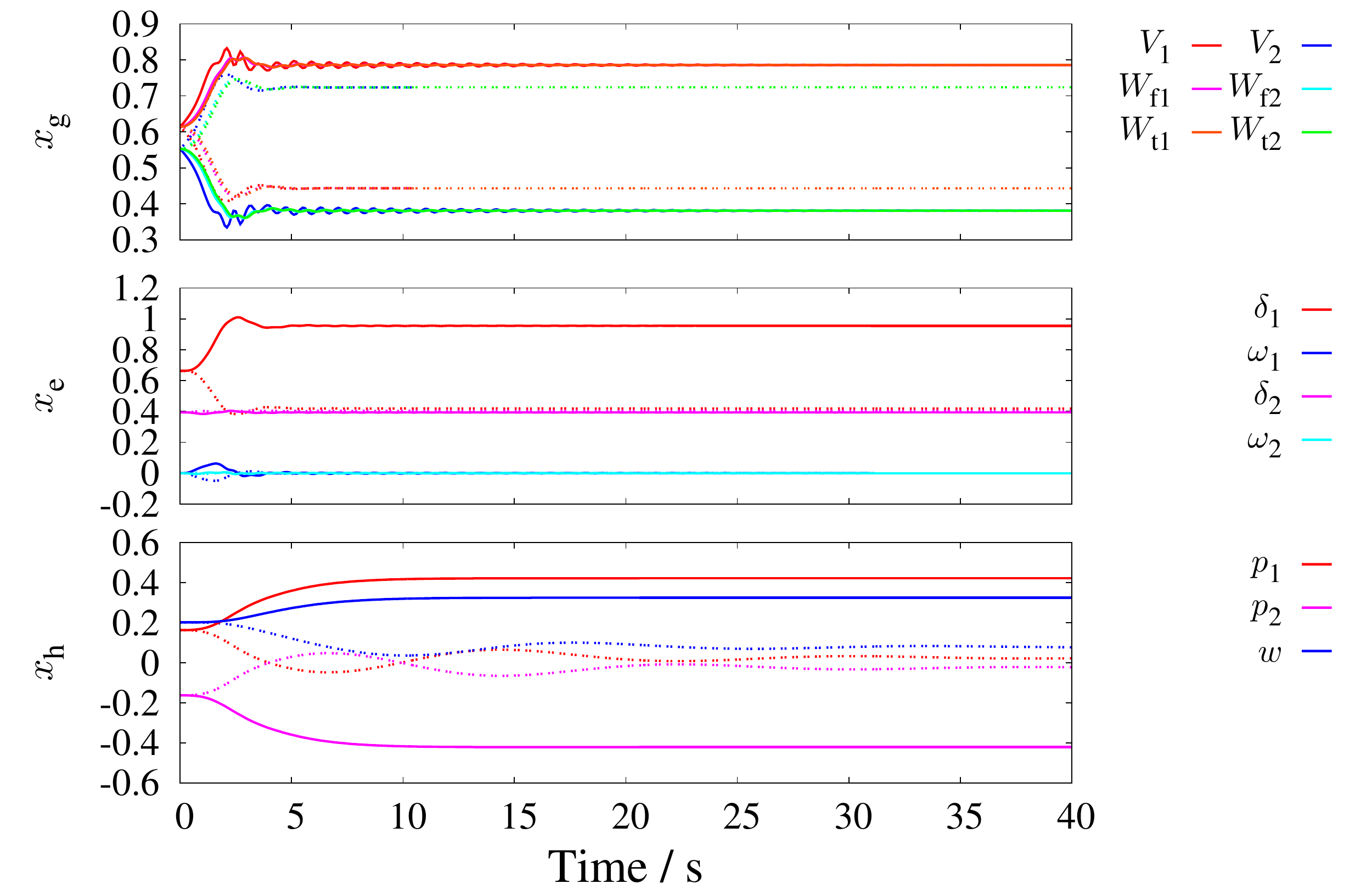}\\[-1mm]
\subcaption{State variables}
\includegraphics[width=0.6\hsize]{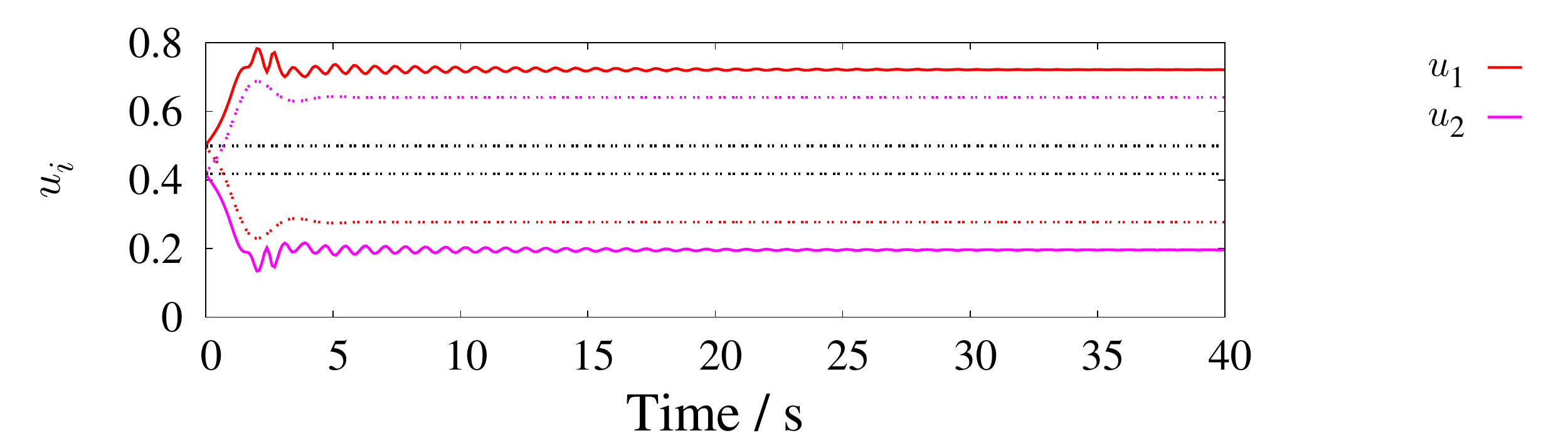}\\[-1mm]
\subcaption{Input variables}}
\caption{Time responses of (a) output, (b) state, and (c) input variables of the model. The solid lines show the responses with  $\hat{y}_{\rm 1, ref}=1.2$, and the broken lines $\hat{y}_{\rm 1, ref}=0.8$.}
\label{fig:simulation}
\end{figure}

Finally, we demonstrate the effectiveness of the control law \eqref{eq:control_law}. 
For $K=0$, the control law stabilizes the equilibrium point determined by $Y_1^{\rm ref}$ and $\hat{Y}_2^{\rm ref}=\sigma_1(0)$. 
Figure~\ref{fig:simulation} shows the time responses of the closed-loop system with $(Y_1^{\rm ref}, \hat{Y}_2^{\rm ref})=(1.2, 0)$ and $(0.8, 0)$. 
The solid lines represent the time responses with $Y_1^{\rm ref}=1.2$, and the broken lines $Y_1^{\rm ref}=0.8$. 
The initial conditions for the numerical simulation are set to the values at the equilibrium point with $(Y_1^{\rm ref}, \hat{Y}_2^{\rm ref})=(1.0, 0)$. 
The coefficients $\alpha_{\rm e1},\,\dots,\, \alpha_{\rm h3}$ of the control law \eqref{eq:control_law} are chosen as follows:
\begin{subequations}
\begin{align}
 & s^5+\alpha_{\rm e5}s^4+\cdots+\alpha_{\rm e1}s+\alpha_{\rm e1} 
 \notag \\ & \hspace{20mm} 
= (s^2+5 s+2.5^2)^2(s+2.5), \\
 & s^3+\alpha_{\rm h3}s^2+ \alpha_{\rm h2}s +\alpha_{\rm h1}s+\alpha_{\rm h1} 
 \notag \\ & \hspace{20mm} 
= (s^2+0.5s+0.25^2)(s+0.25).
\end{align}
\end{subequations}
Thus, the time constants of the closed-loop system are set to $1/2.5 \,{\rm s}$ for $y_1$ and $1/0.25\, {\rm s}$ for $y_2$. 
These constants were chosen by trial and error so that the inputs $u_1,\, u_2$ and the variable $x_{\rm g}$ were kept within their nominal range $[0,\,1]$. 
The simulation result demonstrates that the controller asymptotically stabilizes the corresponding equilibrium point. 
Next, for $K\neq 0$, the control law \eqref{eq:control_law} achieves the objective of regulation in Problem~\ref{prob:tracking}. 
Figure~\ref{fig:tracking} shows the responses of the closed-loop system under $(Y_1^{\rm ref},\,Y_2^{\rm ref})=(1.2, 1.69)$. 
The initial conditions and the coefficients  $\alpha_{\rm e1},\,\dots,\, \alpha_{\rm h3}$ are the same as in \figref{fig:simulation}. 
It is confirmed that the regulation of energy flows $y_1$ and $y_2$ is achieved by choosing a small value of $K$. 
{\color{black}
This is consistent with the fact that the reference $\hat{y}_2^{\rm ref}$ slowly changes on the desired invariant manifold to be stabilized.}

\begin{figure}[t!]
\centering
\includegraphics[width=0.6\hsize]{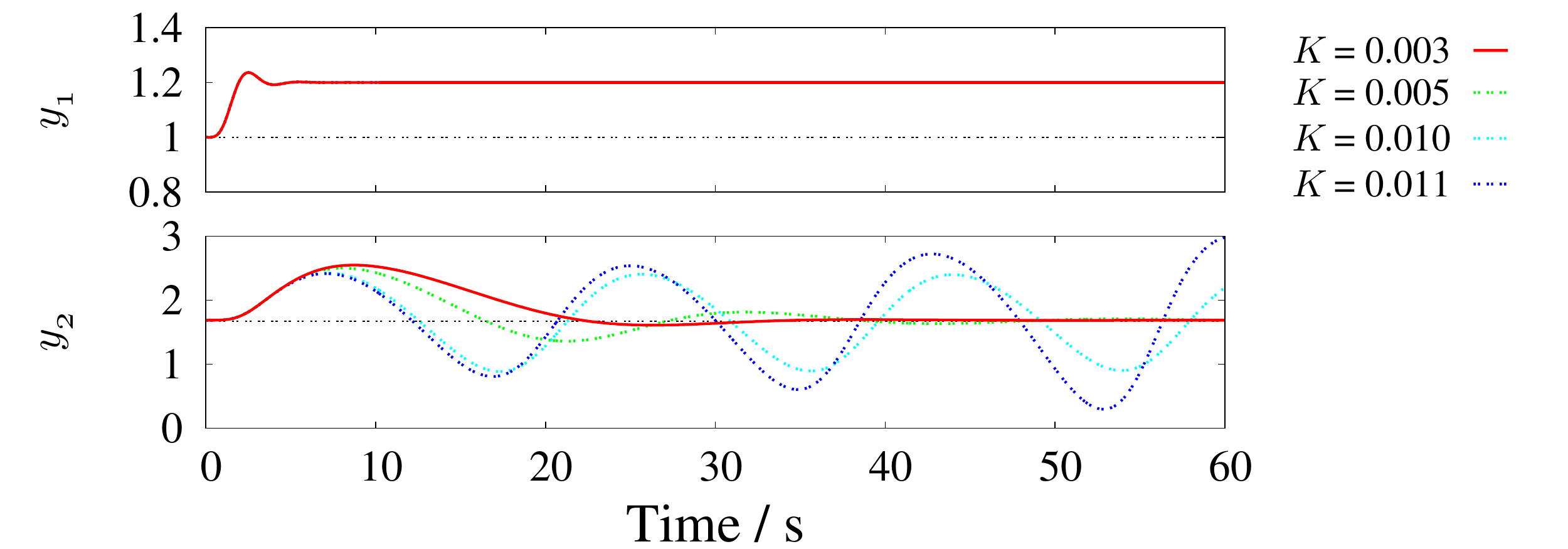}
\caption{Time responses of the outputs under the control law \eqref{eq:control_law} with $K\neq 0$}
\label{fig:tracking}
\end{figure}

\section{Conclusions} \label{sec:conclusion}

This paper introduced a control problem to regulate energy flows in a two-site electricity and heat supply system as a typical and central situation in the next-generation interconnected energy systems.  
We performed a structural analysis of the state-space model of the two-site system through  input-output linearization. 
When the outputs of the state-space model are given as the energy flows in order to regulate these values to given references, the system becomes a non-minimum phase system due to nonexistence of isolated asymptotically stable equilibrium point of the zero dynamics. 
Instead, we identified an invariant manifold characterizing the stability property of the zero dynamics. 
The existence of the manifold is due to physical property of heat supply and characterizes the multiscale property of the dynamics of the controlled system.  
Then, it was shown that by choosing a virtual output parameterizing the dynamics along the invariant manifold, the model became a minimum phase system. 
Based on this, we proposed a control scheme that achieves both regulation of the energy flows and stabilization of an equilibrium point without changing its structure.  
The effectiveness of the controller was established with numerical simulation under a practical setting of model parameters. 

Finally, several future directions are summarized in the following. 
One is to provide a generalized framework of the analysis and control presented in this paper. 
Since the control problem in this paper is specialized to the minimal two-site system, it is significant to obtain a similar result for the general $n$-site system. 
For this, Lemma~\ref{thm:NHIM} can be extended to the general case as shown in \cite{cdc15,cndpaper16}. 
Another future direction is to introduce more practical considerations into the controller design for guaranteeing the robustness in the presence of anticipated uncertainty.
The approaches introduced in \cite{krstic95,sepulchre97,chen15} for nonlinear and robust control design will be considered and deployed. 
Furthermore, since the control objective in this paper is to have the electric power to the infinite bus and the heat flow rate being regulated, the references of the outputs $y_1$ and $y_2$ were constants. 
The problem formulation could be extended to have the outputs tracking time-varying reference trajectories 
for more complex system performance objectives such as compensation of variable outputs of renewable energy sources.

\begin{acknowledgment}
%
This work was partially supported by CREST (\# JPMJCR15K3), JST.  
\end{acknowledgment}

%

\bibliographystyle{asmems4}

\bibliography{dissert_hoshino_rev20180320,self} 

\begin{thebibliography}{10}

\bibitem{geidl07:energyhub}
Geidl, M., Koeppel, G., Favre-Perrod, P., Kl\"{o}ckl, B., Andersson, G., and
  Fr\"{o}hlich, K., 2007.
\newblock ``Energy hubs for the future''.
\newblock {\em IEEE Power \& Energy Magazine, {\bf 5}}(1), jan.-feb.,
  pp.~24--30.

\bibitem{omalley13}
O'Malley, M., and Kroposki, B., 2013.
\newblock ``Energy comes together: The integration of all systems''.
\newblock {\em IEEE Power \& Energy Magazine, {\bf 11}}(5), Sept, pp.~18--23.

\bibitem{liu16}
Liu, X., Wu, J., Jenkins, N., and Bagdanavicius, A., 2016.
\newblock ``Combined analysis of electricity and heat networks''.
\newblock {\em Applied Energy, {\bf 162}}, pp.~1238--1250.

\bibitem{geidl07:opf}
Geidl, M., and Andersson, G., 2007.
\newblock ``Optimal power flow of multiple energy carriers''.
\newblock {\em IEEE Transactions on Power Systems, {\bf 22}}(1), pp.~145--155.

\bibitem{chicco09}
Chicco, G., and Mancarella, P., 2009.
\newblock ``Distributed multi-generation: A comprehensive view''.
\newblock {\em Renewable and Sustainable Energy Reviews, {\bf 13}}(3),
  pp.~535--551.

\bibitem{mancarella14}
Mancarella, P., 2014.
\newblock ``{MES} (multi-energy systems): An overview of concepts and
  evaluation models''.
\newblock {\em Energy, {\bf 65}}, pp.~1--17.

\bibitem{hara13}
Hara, S., 2013.
\newblock Glocal control viewpoint for integrated energy system.
\newblock In \emph{the 52nd IEEE Conference on Decision and Control Workshop}.

\bibitem{rebours07_partI}
Rebours, Y.~G., Kirschen, D.~S., Trotignon, M., and Rossignol, S., 2007.
\newblock ``A survey of frequency and voltage control ancillary services---part
  {I}: Technical features''.
\newblock {\em IEEE Transactions on Power Systems, {\bf 22}}(1), Feb,
  pp.~350--357.

\bibitem{galus11}
Galus, M.~D., Koch, S., and Andersson, G., 2011.
\newblock ``Provision of load frequency control by phevs, controllable loads,
  and a cogeneration unit''.
\newblock {\em IEEE Transactions on Industrial Electronics, {\bf 58}}(10), Oct,
  pp.~4568--4582.

\bibitem{shinji08}
Shinji, T., Sekine, T., Akisawa, A., Kashiwagi, T., Fujita, G., and Matsubara,
  M., 2008.
\newblock ``Reduction of power fluctuation by distributed generation in micro
  grid''.
\newblock {\em Electrical Engineering in Japan, {\bf 163}}(2), pp.~22--29.

\bibitem{mueller14}
Mueller, S., Tuth, R., Fischer, D., Wille-Haussmann, B., and Wittwer, C., 2014.
\newblock ``Balancing fluctuating renewable energy generation using
  cogeneration and heat pump systems''.
\newblock {\em Energy Technology, {\bf 2}}(1), pp.~83--89.

\bibitem{nolta14}
Hoshino, H., Susuki, Y., and Hikihara, T., 2014.
\newblock ``A nonlinear dynamical model of two-sites electricity and heat
  supply system''.
\newblock In 2014 International Symposium on Nonlinear Theory and its
  Applications, pp.~482--485.

\bibitem{cdc15}
Hoshino, H., and Susuki, Y., 2015.
\newblock ``Graph-based modeling and analysis of dynamic flows in steam supply
  networks''.
\newblock In the 54th IEEE Conference on Decision and Control, pp.~1358--1363.

\bibitem{cndpaper16}
Hoshino, H., Susuki, Y., and Hikihara, T., 2016.
\newblock ``A lumped-paramter model of multiscale dynamics in steam supply
  systems''.
\newblock {\em ASME Journal of Computational and Nonlinear Dynamics, {\bf
  11}}(6), p.~061018.

\bibitem{scipaper17-en}
Hoshino, H., Susuki, Y., Koo, T.J., and Hikihara, T., 2017.
\newblock ``Nonlinear control of combined heat and power plants in a two-site
  regional energy system ---simultaneous regulation of electricity and gas
  flows''.
\newblock {\em Transactions of ISCIE, {\bf 30}}(5), pp.~157--166.
\newblock (in Japanese).

\bibitem{isidori95}
Isidori, A., 1995.
\newblock {\em Nonlinear Control Systems}, 3rd~ed.
\newblock Springer-Verlag.

\bibitem{sastry99}
Sastry, S., 1999.
\newblock {\em Nonlinear Systems: Analysis, Stability, and Control}.
\newblock Springer-Verlag.

\bibitem{fiorentini12}
Fiorentini, L., and Serrani, A., 2012.
\newblock ``Adaptive restricted trajectory tracking for a non-minimum phase
  hypersonic vehicle model''.
\newblock {\em Automatica, {\bf 48}}(7), pp.~1248--1261.

\bibitem{ilic12}
Ili\'c, M.~D., and Liu, Q., 2012.
\newblock ``Toward sensing, communications and control architectures for
  frequency regulation in systems with highly variable resources''.
\newblock In {\em Control and Optimization Methods for Electric Smart Grids},
  A.~Chakrabortty and M.~D. Ili\'c, eds. Springer-Verlog.

\bibitem{dorfler16}
D\"orfler, F., Simpson-Porco, J.~W., and Bullo, F., 2016.
\newblock ``Breaking the hierarchy: Distributed control and economic optimality
  in microgrids''.
\newblock {\em IEEE Transactions on Control of Network Systems, {\bf 3}}(3),
  Sept, pp.~241--253.

\bibitem{stegink17}
Stegink, T.~W., Persis, C.~D., and van~der Schaft, A.~J., 2017.
\newblock ``Stabilization of structure-preserving power networks with market
  dynamics''.
\newblock {\em IFAC-PapersOnLine, {\bf 50}}(1), pp.~6737 -- 6742.
\newblock 20th IFAC World Congress.

\bibitem{persis14}
Persis, C.~D., Jensen, T.~N., Ortega, R., and Wisniewski, R., 2014.
\newblock ``Output regulation of large-scale hydraulic networks''.
\newblock {\em IEEE Transactions on Control Systems Technology, {\bf 22}}(1),
  Jan, pp.~238--245.

\bibitem{buoro14}
Buoro, D., Pinamonti, P., and Reini, M., 2014.
\newblock ``Optimization of a distributed cogeneration system with solar
  district heating''.
\newblock {\em Applied Energy, {\bf 124}}, pp.~298--308.

\bibitem{nedo_kobe18}
{NEDO}.
\newblock News release: Nedo conducts world's first technology demonstration
  for hydrogen-fueled cogeneration system in urban areas.
\newblock Available:
  \url{http://www.nedo.go.jp/english/news/AA5en_100348.html}. [Accessed:
  22-Mar-2018].

\bibitem{araposthatis81}
Arapostathis, A., Sastry, S., and Varaiya, P., 1981.
\newblock ``Analysis of power-flow equation''.
\newblock {\em International Journal of Electrical Power \& Energy Systems,
  {\bf 3}}(3), pp.~115--126.

\bibitem{ueda92}
Ueda, Y., Enomoto, T., and Stewart, H.~B., 1992.
\newblock ``Chaotic transients and fractal structures governing coupled swing
  dynamics''.
\newblock In {\em Applied Chaos}, J.~H. Kim and J.~Stringer, eds. John Wiley \&
  Sons, pp.~207--218.

\bibitem{kundur94}
Kundur, P., 1994.
\newblock {\em Power System Stability and Control}.
\newblock McGraw-Hill, New York.

\bibitem{bujak09}
Bujak, J., 2009.
\newblock ``Optimal control of energy losses in multi-boiler steam systems''.
\newblock {\em Energy, {\bf 34}}(9), pp.~1260--1270.

\bibitem{chinese08}
Chinese, D., 2008.
\newblock ``Optimal size and layout planning for district heating and cooling
  networks with distributed generation options''.
\newblock {\em International Journal of Energy Sector Management, {\bf 2}}(3),
  pp.~385--419.

\bibitem{rowen83}
Rowen, W.~I., 1983.
\newblock ``Simplified mathematical representations of heavy-duty gas
  turbines''.
\newblock {\em Journal of Engineering for Power, {\bf 105}}(4), pp.~865--869.

\bibitem{machowski08}
Machowski, J., Bialek, J.~W., and Bumby, J.~R., 2008.
\newblock {\em Power System Dynamics: Stability and Control}, 2nd~ed.
\newblock John Wiley \& Sons.

\bibitem{astrom00}
{\AA}str\"{o}m, K.~J., and Bell, R.~D., 2000.
\newblock ``Drum-boiler dynamics''.
\newblock {\em Automatica, {\bf 36}}(3), Mar., pp.~363--378.

\bibitem{krstic95}
Krstic, M., Kanellakopoulos, I., and Kokotovic, P.~V., 1995.
\newblock {\em Nonlinear Control Systems}.
\newblock Wiley.

\bibitem{sepulchre97}
Sepulchre, R., Jankovi\'c, M., and Kokotovi\'c, P.~K., 1997.
\newblock {\em Constructive Nonlinear Control}.
\newblock Springer-Verlag.

\bibitem{chen15}
Chen, Z., and Huang, J., 2015.
\newblock {\em Nonlinear Output Regulation}.
\newblock SIAM.

\bibitem{kim00}
Kim, T.~S., Lee, D.~K., and Ro, S.~T., 2000.
\newblock ``Dynamic behavior analysis of a heat recovery steam generator during
  start-up''.
\newblock {\em International Journal of Energy Research, {\bf 24}}(2), Feb.,
  pp.~137--149.

\bibitem{osiadacz87}
Osiadacz, A.~J., 1987.
\newblock {\em Simulation and Analysis of Gas Networks}.
\newblock E. F. N. Spon.

\end{thebibliography}

\appendix       
%

\begin{table}[ht] %
 \centering
 \caption{Parameters for numerical simulation} \label{tab:parameters}
 \fontsize{9}{10}\selectfont
 \begin{tabular}{c l c} \hline
Symbol & Meaning & Value \\\hline
  $T_{{\rm v}i}$ & Valve positioner time constant & $0.05\,{\rm s}$ \\
  $T_{{\rm f}i}$ & Fuel system time constant & $0.4\,{\rm s}$ \\
  $T_{{\rm CD}i}$ & Compressor volume time constant & $0.1\,{\rm s}$ \\
  $W_{{\rm o}i}$ &  Fuel valve lower limit & $0.23\,{\rm p.u.} $ \\   
  $K_{\rm e1}$ & Rated mechanical power \#1 & $7.5\,{\rm MW}$\\
  $K_{\rm e2}$ & Rated mechanical power \#2 & $3.0\,{\rm MW}$\\
  $K_{{\rm h}i}$  &Rated heat output & $6.0\,{\rm MJ/s} $\\
  $\beta_i$ & Coefficient for no fuel condition & $0$\\ 
 \hline
 $\omega_{\rm s}$ & Synchronous speed & 377\,{\rm rad/s} \\
 $H_i$ & Per-unit inertia time constant & $10\,{\rm s}$ \\
 $D_i$ & Damping coefficient & $0.05\,{\rm p.u.}$  \\
 $E_i$ & Voltage of the generator & $1.0\,{\rm p.u.}$ \\
 $B_{i0}$ & Transfer susceptance to infinete bus & $1.0\,{\rm p.u.} $\\
 $B_{12}$ & Transfer  susceptance & $0.5\,{\rm p.u.} $\\
 $G_{ij}$ & Transfer conductances & 0 \\
 \hline
  $p_0$ & Nominal value of pressure &$800\,{\rm kPa}$ \\
  $\rho_{\rm s}$  &Density of saturated steam &$4.161\,{\rm kg/m^3}$\\
  $h_{\rm s}$ &Specific enthalpy of steam &2768\,{\rm kJ/kg}\\
  $h_{\rm w}$ &Specific enthalpy of water & 721.0\,{\rm kJ/kg}\\
  $e_i$ &Coefficient of pressure variation & $3073\,{\rm J/Pa}$\\
  $d$&Diameter of the steam pipe&$0.2\,{\rm m}$\\
  $L$ & Length of the pipe & 200\,{\rm m} \\
  $\lambda$&Friction coefficient&0.016\\
  $Q'_{{\rm L}1}$&Heat consumption at site \#1&$2.0\,{\rm MJ/s}$ \\
  $Q'_{{\rm L}2}$&Heat consumption at site \#2&$5.0\,{\rm MJ/s}$
 \\\hline
 \end{tabular}
\end{table}

\section{Model equations} \label{sec:model_equations}

\subsection{Gas turbine}

The dynamics of gas turbine are described by a simplified form of the model in \cite{rowen83}. 
The state variable $x_{\rm g}$ consists of,  
the variable $v_{{\rm p}i}$ standing for the position of fuel valve of the gas turbine $\# i$, and $w_{{\rm f}i}$ and $w_{{\rm t}i}$ for the fuel flow rates at the combustor and at the turbine. 
The variables are scaled by their rated values, so that they are normally in the range of $[0,1]$. 
The dynamics of the gas turbines are represented by the following equation: for $i=1,\,2$, 
\begin{subequations}
\label{eq:gas_turbine_model}
\begin{align}
 \dot{v}_{{\rm p}i} &= \dfrac{1}{T_{{\rm v}i}}(-v_{{\rm p}i} + W_{{\rm o}i}) + \dfrac{1-W_{{\rm o}i}}{T_{{\rm v}i}} u_i ,\\
 \dot{w}_{{\rm f}i}  &= \dfrac{1}{T_{{\rm f}i}} ( -w_{{\rm f}i} +v_{{\rm p}i} ), \\ %
 \dot{w}_{{\rm t}i} &=  \dfrac{1}{T_{{\rm CD}i}} ( -w_{{\rm t}i}+w_{{\rm f}i} ) 
\end{align}
\end{subequations}
where $T_{{\rm v}i}$, $T_{{\rm f}i}$, and $T_{{\rm CD}i}$ stand for the time constants of the valve positioning system, the down stream piping and fuel gas distribution, and the compressor discharge. 
The parameter $W_{{\rm o}i}$ stands for the lower limit of the fuel valve. 
The mechanical power $P_{{\rm m}i}$ and the heat flow rate $Q'_{{\rm a}i}$ are given as follows:
\begin{subequations} \label{eq:electricity_to_heat_ratio}
\begin{align}
 P_{{\rm m}i} & := K_{{\rm e}i} \left( w_{{\rm t}i} - W_{{\rm o}i} \right)/(1-W_{{\rm o}i}), 
 \label{eq:gasturbine_output} \\
 Q'_{{\rm a}i} & := K_{{\rm h}i}  (w_{{\rm f}i} +\beta_i)/(1+\beta_i) 
\end{align}
\end{subequations}
where $K_{{\rm e}i}$ and $K_{{\rm h}i}$ stand for the rated outputs of mechanical power and heat flow rate.  
The parameter $K_{{\rm h}i}\beta_i/(1+\beta_i)$ represents the heat output with no fuel input. 
By defining $ x_{\rm g} := [ v_{\rm p1} ~ w_{\rm f1} ~ w_{\rm t1} ~ v_{\rm p2} ~ w_{\rm f2} ~ w_{\rm t2} ]^\top$, the functions $f_{\rm g}$ and $g_{{\rm g}i}$ in \eqref{eq:system_equation} are given as follows:
\begin{align}
 f_{\rm g}(x_{\rm g}) := 
\begin{bmatrix}
    (-x_{{\rm g}1} + W_{{\rm o}1}) / T_{{\rm v}1} \\ 
    ( -x_{{\rm g}2} +x_{{\rm g}1} )  / T_{{\rm f}1} \\
    ( -x_{{\rm g}3}+x_{{\rm g}2} )/  T_{{\rm CD}1} \\
    (-x_{{\rm g}4} + W_{{\rm o}2}) / T_{{\rm v}2} \\ 
    ( -x_{{\rm g}5} +x_{{\rm g}4} )  / T_{{\rm f}2} \\
    ( -x_{{\rm g}6}+x_{{\rm g}5} )/  T_{{\rm CD}2}
\end{bmatrix}, 
\end{align}
\begin{align}
 g_{\rm g1} := 
\begin{bmatrix} (1-W_{{\rm o}1})/T_{{\rm v}1} & 0 & 0& 0& 0 & 0 \end{bmatrix}^\top, \\
 g_{\rm g2} := 
\begin{bmatrix} 0 & 0 & 0& (1-W_{{\rm o}2})/T_{{\rm v}2} & 0 & 0 \end{bmatrix}^\top.
\end{align}

\subsection{Electric subsystem}
The model of electric subsystem is based on the swing equation \cite{machowski08}. 
The state variable $x_{\rm e}$ consists of the electric angular position $\delta_i$ of rotor with respect to the infinite bus and the deviation of rotor speed  $\omega_{i}$ relative to the synchronous speed $\omega_{\rm s}$ for each generator. 
The variable $\delta_i$ is in the electrical radian, and $\omega_i$ is scaled by $\omega_{\rm r}:=\sqrt{\omega_{\rm s}/2H_i}$, where $H_i$ stands for the per-unit time constant of rotor. 
The dynamics of the electric sub-system are represented as follows: for $i=1,\,2$, 
\begin{subequations}
  \label{eq:electricity_model}
  \begin{align}
    \dot{\delta_i} =& \dfrac{1}{T_{{\rm e}i}} \omega_i \\
   \dot{\omega_i} =& \dfrac{1}{T_{{\rm e}i}} \{P_{{\rm m}i}(x_{\rm g}) -D_{i} \omega_i -P_{{\rm e}i}(\delta_1,\,\delta_2) \}  
  \end{align}
\end{subequations}
where $T_{{\rm e}i} :=1/\omega_{\rm r}$, and $D_i$ stands for the damping coefficient. 
The function $P_{{\rm e}i}(\delta_1,\,\delta_2)$ stands for the electric output power of the generator $\# i$,  given by 
\begin{align}
& P_{{\rm e}i}(\delta_1,\delta_2)= \sum_{j\in\{1,2,\infty\}} P_{ij}(\delta_1,\delta_2)
\end{align}
with the symbol $\infty$ representing the infinite bus, and $P_{ij}(\delta_1,\delta_2)$ given by 
\begin{align}
 \label{eq:power_flows}
& P_{ij}(\delta_1,\delta_2)= E_i E_j\{G_{ij}\cos(\delta_i-\delta_j) \notag\\ &\hspace{30mm} 
+B_{ij}\sin(\delta_i-\delta_j) \} 
\end{align}
where $\delta_\infty=0$ and $E_i$ corresponds to the voltage behind synchronous reactance. 
The parameters  $G_{ij}+{\rm i} B_{ij}$ stand for the transfer admittances associated with the voltages $E_i$ and $E_j$, and the electric load consumptions and transmission losses are included in $G_{ij}$.
Similarly, the electric power to the infinite bus is described by 
\begin{align}
 P_{\rm e\infty}(\delta_1, \delta_2):= -P_{\rm \infty1}(0, \delta_1) -P_{\rm \infty2}(0, \delta_2). 
\end{align}
By defining $x_{\rm e} := [ \delta_1 ~ \omega_1 ~ \delta_2 ~ \omega_2 ]^\top$, the function $f_{\rm e}$ is given as follows: 
\begin{align} 
 &f_{\rm e}(x_{\rm e},\,x_{\rm g})  := \notag \\ &
\begin{bmatrix}  
 x_{\rm e2}/ T_{{\rm e}1} \\ 
 \{P_{{\rm m}1}(x_{\rm g3}) -D_{1} x_{\rm e2} -P_{{\rm e}1}(x_{\rm e1}, \, x_{\rm e3}) \}  / T_{{\rm e}1} \\
 x_{\rm e4}/ T_{{\rm e}2} \\ 
 \{P_{{\rm m}2}(x_{\rm g6}) -D_{1} x_{\rm e4} -P_{{\rm e}2}(x_{\rm e1}, \, x_{\rm e3}) \}  / T_{{\rm e}2}
\end{bmatrix}.
\end{align}

\subsection{Heat subsystem} \label{sec:model_heat}

For the heat subsystem, we utilize a simplified model derived in \cite{cdc15,cndpaper16} based on \cite{astrom00,kim00,osiadacz87}. 
The dynamics are described by $p_i$ standing for the deviation of boiler pressure relative to the nominal value and $w$ for the velocity of steam inside the pipe. 
The variables $w$ and $p$ are scaled by $w_{\rm r}:=Q'_{\rm r}/d^2h_{\rm r}\rho_{\rm r}$ and $p_{\rm r}:=\rho_{\rm r}w_{\rm r}^2$, where $Q'_{\rm r}$, $h_{\rm r}$, and $\rho_{\rm r}$ are rated values of the heat flow rate, enthalpy, and density. 
The parameter $d$ stands for the diameter of the pipe.
The dynamics of the heat subsystem are represented by the following equations:
\begin{subequations}
 \label{eq:heat_model}
 \begin{align}
   \dot{p}_1&= \dfrac{1}{T_{\rm h1}} \{ Q'_{\rm a1}(w_{{\rm f}i}) -Q'_{{\rm L}1} - Q'_{12}(w) \}, \\
   \dot{p}_2 &= \dfrac{1}{T_{\rm h2}} \{ Q'_{\rm a2}(w_{{\rm f}i}) -Q'_{{\rm L}2}  - Q'_{21}(w) \}, \\
   \dot{w} &= \dfrac{1}{T_{\rm h3}} \biggl( \dfrac{p_1-p_2}{\rho_{\rm s}} - \frac{\lambda L}{2d} w|w| \biggr) 
 \end{align}
\end{subequations}
where $Q'_{{\rm L}i}$ stands for the heat load at each site, $\rho_{\rm s}$ for the density of steam, $\lambda$ for the friction coefficient, and $L$ for the length of the pipe. 
The functions $Q'_{12}(w)$ and $Q'_{21}(w)$ stand for the heat flow rates through the pipe and are given as follows:
\begin{align}
  Q'_{12}(w) = -Q'_{21}(w) = \dfrac{\pi }{4}h_{\rm c}\rho_{\rm s} w 
\end{align}
where $h_{\rm c}$ stands for the enthalpy of condensation.
The time constants $T_{{\rm h}i}$ of boilers for $i=1,2$ are given by $T_{{\rm h}i}:=Q'_{\rm r} e_i /d^4 h_{\rm r}^2 \rho_{\rm r}$, where $e_i$ stands for the rate of change of internal energy stored in the boilers. 
The time constant $T_{\rm h3}$ for response of steam flow in the pipe is given as $T_{\rm h3}:=d^2L h_{\rm r}\rho_{\rm r}/Q'_{\rm r}$.  
For the analysis in \secref{sec:analysis}, where Lie derivatives with respect to the vectorfield described by \eqref{eq:heat_model} are calculated, we assume $w>0$ so as to ensure the smoothness of the vectorfield, although we do not consider the constraint on $w$ explicitly. 

For the current two-site system, it can be seen from \eqref{eq:heat_model} that the dynamics of the heat subsystem are governed by the following two variables: 
the pressure difference $p_1-p_2$ and the steam velocity $w$. 
Note that it is common for hydraulic networks to be represented by these two kinds of variables \cite{osiadacz87,persis14}. 
Based on this, here we introduce the following coordinate transformation to defined the state variable $x_{\rm h}$: 
\begin{align}
 \label{eq:transformation_decouple}
 x_{\rm h} :=
\begin{bmatrix}
 p_1-p_2 \\ w \\ \dfrac{T_{\rm h1}p_1+T_{\rm h2} p_2}{T_{\rm h1}+T_{\rm h2}} 
\end{bmatrix} 
= {\sf T}
\begin{bmatrix} p_1 \\ p_2 \\ w \end{bmatrix}
\end{align}
with 
\begin{align} 
{\sf T} := 
\begin{bmatrix}
 1 & -1  & 0 \\ 0 & 0& 1 \\
 \dfrac{T_{\rm h1}}{T_{\rm h1}+T_{\rm h2}} & \dfrac{T_{\rm h2}}{T_{\rm h1}+T_{\rm h2}} & 0 
\end{bmatrix}. 
\end{align}
In the new coordinate, the first two variables stand for the pressure difference and the velocity, and the last for the (weighted) averaged pressure level of the two boilers. 
From the above observation that the dynamics described by \eqref{eq:heat_model} is independent of $x_{\rm h3}$ representing the averaged pressure level. 
The function $f_{\rm h}$ is given as follows: 
\begin{align}
 & f_{\rm h}(x_{\rm h}, x_{\rm g})  :=  \notag \\
 & \begin{bmatrix} 
  (Q'_{\rm a1}(x_{{\rm g}2}) -Q'_{{\rm L}1} - Q'_{12}(x_{\rm h2}))/T_{\rm h1} \, ... \\ \hspace{8mm} - (Q'_{\rm a2}(x_{{\rm g}5}) -Q'_{{\rm L}2}  - Q'_{21}(x_{\rm h2}))/T_{\rm h2} \\
  \dfrac{1}{T_{\rm h3}} \biggl( \dfrac{x_{\rm h1}}{\rho_{\rm s}} - \dfrac{\lambda L}{2d} x_{\rm h2}|x_{\rm h2}|  \biggr) \\
  Q'_{\rm a1}(x_{{\rm g}2}) -Q'_{{\rm L}1} + Q'_{\rm a2}(x_{{\rm g}5}) -Q'_{{\rm L}2} 
  \end{bmatrix}. 
\end{align}

\section{Proof of Lemma~\ref{thm:normal_form}} \label{sec:proof_normal_form}

The proof follows from direct application of input-output linearization.   
The Lie derivatives needed for the linearization can be calculated with symbolic computation e.g. Mathematica. 
For the chosen output, the decoupling matrix ${\sf A}(x)$ is obtained by differentiating the outputs $y_1$ and $y_2$ in $5$ and $4$ times, respectively, i.e. for $i=1,2$, 
\begin{align}
 & L_{g_i} L_f^0 h_{\rm e}(x) \equiv \cdots \equiv L_{g_i}L_f^3 h_{\rm e}(x) \equiv   0, \\
 & L_{g_i} L_f^0 h_{\rm h}(x) \equiv L_{g_i}L_f h_{\rm h}(x) \equiv  L_{g_i}L_f^2 h_{\rm h}(x) \equiv 0,
\end{align}
and the following state-dependent matrix can be defined:  
\begin{align}
{\sf A}(x) &:= 	
\begin{bmatrix} L_{g_1}L_f^4 h_{\rm e}(x) &L_{g_2}L_f^4 h_{\rm e}(x)  \\[1.5ex]
 L_{g_1} L_f^3 h_{\rm h}(x) & L_{g_2} L_f^3 h_{\rm h}(x) 
\end{bmatrix}.  
\end{align}
The matrix ${\sf A}(x)$ depends only on $x_{\rm e}$, and its determinant is given by 
\begin{align}
\label{eq:decoupling}
 {\rm det}{\sf A}(x) = -A_{\rm 1} \dfrac{{\rm d}P_{\infty1}}{{\rm d}x_{\rm e1}}(x_{\rm e1}) -A_{\rm 2} \dfrac{{\rm d}P_{2\infty}}{{\rm d}x_{\rm e3}}(x_{\rm e3})
\end{align}
where $A_{\rm 1}$ and $A_{\rm 2}$ are positive constants. 
The functions $P_{\infty 1}$ and $P_{\infty 2}$ are given in \eqref{eq:power_flows}, and thus it is possible to choose an open set $D_{\rm 1} \subset X$ such that ${\rm det}{\sf A}(x)\neq0$. 
Thus, the model has vector relative degree $\{5,4\}$ at $x \in {D}_1$. 
Then, in terms of the coordinate transformation \eqref{eq:coordinate_transformation}, the determinant of the Jacobian matrix is given by 
\begin{align}
\label{eq:transformation}
{\rm det}{\rm D}\Phi(x) =& -F_0 \left\{\dfrac{{\rm d}P_{\infty2}}{{\rm d}x_{\rm e3}} (x_{\rm e3}) \right\}^3  \notag \\ &
  \cdot\left\{ F_1\dfrac{{\rm d}P_{\infty1}}{{\rm d}x_{\rm e1}}(x_{\rm e1})+F_2\dfrac{{\rm d}P_{\infty2}}{{\rm d}x_{\rm e3}}(x_{\rm e3}) \right\} \notag \\
 & \cdot\left\{ F_3\dfrac{{\rm d}P_{\infty1}}{{\rm d}x_{\rm e1}}(x_{\rm e1})+F_4\dfrac{{\rm d}P_{\infty2}}{{\rm d}x_{\rm e3}}(x_{\rm e3}) \right\}
\end{align}
where $F_0, \dots, F_4$ are positive constants determined by the parameters of the model. 
Since the function \eqref{eq:transformation} does not vanish for all $x$, there exists an open set $D_2 \subset X$ such that ${\rm det}{\rm D}\Phi(x)\neq 0$ at $x\in D_2$. 
From \eqref{eq:decoupling} and \eqref{eq:transformation},  it is possible to choose an open set $D$ satisfying $D:=D_1 \cap D_2$, and the proof is thus completed. 

\section{Proof of Lemma \ref{lem:independence_eta4}} \label{sec:independence_eta4}

The function $q(\xi_{\rm e},\,\xi_{\rm h},\,\eta)$, which describes the internal dynamics, is given by $r(x)$ in \eqref{eq:function_r}  through the coordinate transformation $\Phi$. 
Thus, it needs to be clarified how the state variables $x_{\rm g}$, $x_{\rm e}$, and $x_{\rm h}$ depend on the variables $\xi_{\rm e}$, $\xi_{\rm h}$, and $\eta$.  
Particularly, if the variables appearing in $r$ do not depend on $\eta_4$, the proof is completed. 
To this end, we investigate how the variables $\xi_{\rm e}$ and $\xi_{\rm h}$ depend on $x$. 
Because the dynamics of the heat subsystem do not affected by the averaged pressure level, $\xi_{\rm e}$ and $\xi_{\rm h}$ are independent of the variable. 
Thus, we can define the following twelve-dimensional coordinate transformation $\Psi$: 
\begin{align}
 \Psi :  (x_{\rm g} ,\,x_{\rm e}, x_{\rm h1}, \, x_{\rm h2})  
\mapsto (\xi_{\rm e},\,\xi_{\rm h}, [\eta_1,\, \eta_2,\, \eta_3 ]^\top).  
\end{align}
Here ${\rm det}{\rm D}\Psi\neq 0$ holds on the subset $D$ stated in Lemma\,\ref{thm:normal_form} because ${\rm det}{\rm D} \Phi \neq 0$. 
Thus, the coordinate transformation $\Psi$ is one-to-one on the subset $\hat{D}$, and the variables $x_{\rm e}$ and $x_{\rm g}$ as well as $x_{\rm h1}$ and $x_{\rm h2}$ are independent of $\eta_4$. 
The proof is thus completed.

\section{Proof of Lemma~\ref{lem:redefined_normal_form}} \label{sec:proof_redefined_normal_form}

The proof is completed in the same way as that of Lemma\,\ref{thm:normal_form}. 
For the redefined output, the new decoupling matrix $\hat{\sf A}(x)$ is obtained by differentiating the outputs $y_1$ and $\hat{y}_2$ in 5 and 3 times, respectively: 
\begin{align}
 \hat{\sf A}(x) &:= 	
\begin{bmatrix} L_{g_1}L_f^4 h_{\rm e}(x) &L_{g_2}L_f^4 h_{\rm e}(x)  \\[1.5ex]
 L_{g_1} L_f^2 \hat{h}_{\rm h}(x) & L_{g_2} L_f^2 \hat{h}_{\rm h}(x) \end{bmatrix}.
\end{align}
The matrix $\hat{\sf A}(x)$ depends only on $x_{\rm e}$, and its determinant is given by
\begin{align}
\label{eq:decoupling_hat}
{\rm det}\hat{\sf A}(x)= -\hat{A}_1 \dfrac{{\rm d}P_{\rm \infty1}}{{\rm d}x_{\rm e1}}(x_{\rm e1}) +\hat{A}_2 \dfrac{{\rm d}P_{\rm 2\infty}}{x_{\rm e3}}(x_{\rm e3}),  
\end{align}
where $\hat{A}_1$ and $\hat{A}_2$ are positive constants.  
Thus, it is possible to choose an open set $\hat{D}_1 \subset X$ such that ${\rm det}\hat{\sf A}(x)\neq0$, and the model has vector relative degree $\{5,\,3\}$ at $x\in \hat{D}_1$. 
Then, in terms of the coordinate transformation \eqref{eq:coordinate_transformation_redefined}, the determinant of the Jacobian matrix is given by 
\begin{align} \label{eq:transformation_hat}
{\rm det}{\rm D}\hat{\Phi}(x) =& \hat{F}_0 \left \{ \dfrac{{\rm d}P_{2\infty}}{{\rm d}x_{\rm e3}}(x_{\rm e3})\right\}^3 \notag \\ & 
\cdot \left\{ \hat{F}_1 \dfrac{{\rm d}P_{1\infty}}{{\rm d}x_{\rm e1}}(x_{\rm e1})-\hat{F_2}\dfrac{{\rm d}P_{2\infty}}{{\rm d}x_{\rm e3}}(x_{\rm e3})\right\} \notag \\
 & \cdot \left\{ \hat{F}_3 \dfrac{{\rm d}P_{1\infty}}{{\rm d}x_{\rm e1}}(x_{\rm e1})-\hat{F_4}\dfrac{{\rm d}P_{2\infty}}{{\rm d}x_{\rm e3}}(x_{\rm e3}) \right\} 
\end{align}
where $\hat{F}_0, \dots, \hat{F}_4$ are positive constants.  
Since the function \eqref{eq:transformation_hat} does not vanish for all $x$, there exists an open set $\hat{D}_2 \subset X$ such that ${\rm det}{\rm D}\hat{\Phi}(x)\neq 0$ at $x\in \hat{D}_2$. 
From \eqref{eq:decoupling_hat} and \eqref{eq:transformation_hat},  it is possible to choose an open set $\hat{D}$ satisfying $\hat{D}:=\hat{D}_1 \cap \hat{D}_2$, and the proof is thus completed.

\section{Proof of Lemma~\ref{th:internal_dynamics}} \label{sec:proof_internal_dynamics}

The proof is completed in a similar way to that in Appendix~\ref{sec:independence_eta4}.  
We first investigate how the variables $\xi_{\rm e}$ and $\hat{\xi}_{\rm h}$ depend on $x$. 
From \eqref{eq:system_equation}, it follows that the variable $\xi_{\rm e}$ depends only on $x_{\rm e}$ and $x_{\rm g}$. 
It is also verified that $\hat{\xi}_{\rm h}$ depends only on $x_{\rm h3}$ and $x_{\rm g}$. 
Particularly, the variables $\hat{\xi}_{\rm h2}$ and $\hat{\xi}_{\rm h3}$ depend only on $x_{\rm g}$ because  the dynamics of the heat subsystem are not affected by the averaged pressure level.
Thus, we can define the following ten-dimensional coordinate transformation $\hat{\Psi}$: 
\begin{align}
 \hat{\Psi}:  (x_{\rm g},\,x_{\rm e},\,x_{\rm h1},\, x_{\rm h2} ) 
 \mapsto (\xi_{\rm e},\,[\hat{\xi}_{\rm h2},\,\hat{\xi}_{\rm h3}]^\top, \hat{\eta}).  
\end{align}
Here ${\rm det}{\rm D}\hat{\Psi}\neq 0$ holds on the subset $\hat{D}$ stated in Lemma\,\ref{lem:redefined_normal_form} because ${\rm det}{\rm D} \Phi \neq 0$. 
Thus, the coordinate transformation $\hat{\Psi}$ is one-to-one on the subset $\hat{D}$, and the variables $x_{\rm e}$ and $x_{\rm g}$ as well as $x_{\rm h1}$ and $x_{\rm h2}$ are independent of $\hat{\xi}_{\rm h1}$. 
The proof is thus completed.

\end{document}